\documentclass[12pt,reqno]{amsart}
\newcommand\theoremnumbering{subsection}   %to come before inputting initializing
\usepackage{tikz}
\usepackage{tikz-cd}

\listfiles 	% To list all files that are being used
%\tracingonline=1\show\groups     %helpful for debugging
%%%%%%%%%%%%%%%%%%%%%%%%%%%%%%%%%%%%%%%%%%%%%%%%%%%%%%%%%%%%%%%%%
%%%%%%%  For smaller margins and better readability %%%%%%%%%%%%%%%
%%%%%%%%%%%%%%%%%%%%%%%%%%%%%%%%%%%%%%%%%%%%%%%%%%%%%%%%%
\usepackage{fullpage,setspace}
%\usepackage[cm]{fullpage}
%\addtolength{\evensidemargin}{1in}
%\addtolength{\topmargin}{.5cm}
%\addtolength{\bottommargin}{.5cm}
%\addtolength{\textwidth}{-1in}
%\addtolength{\textheight}{-1in}
%%%%%%%%%%%%%%%%%%%%%%%%%%%%%%%%%%%%%%%%%%%%%%%%%%%
%%%%%%%%%%%%%%%  for fonts %%%%%%%%%%%  names of the fonts learnt from Manjunath Krishnapur, May 2015 %%%%%%%%%%%%%%%%
%%%%%%%%%%%%%%%%%%%%%%%%%%%%%%%%%%%%%%%%%%%%%%%%%%%
\usepackage{mathrsfs} % https://www.ctan.org/pkg/mathrsfs
%%%%%%%%%%%%%%%%%%%%%%%%%%%%%%%%%%%%%%%%%%%%%%%%%%%
\newcommand\choosefont[1]{\usepackage{#1}}
%%%%%%%   How to use in source latex file %%%%%%%%%%%%
%\choosefont{mathpazo}    %some options: times, pslatex, palatino,
                        %mathpazo, mathptmx, helvet, avant, courier,
                        %chancery, bookman, newcent, charter, fourier, eulervm
%%%%%%%%%%%%%%%%%%%%%%%%%%%%%%%%%%%%%%%%%%
\usepackage[bottom]{footmisc}  % to get footnotes at the bottom of the page
%%%%%%%%%%%%%%%%%%%%%%%%%%%%%%%%%%%%%%%%%%
%%%%%%%%%%%%%%%%%%%%%%%%%%%%%%%%%%%%%%%%%%
%%%%%%%%%%%%%%%%%%%%%%%%%%%%%%%%%%%%%%%%%%
%%%%%%%%%%%%%%%%%%%%%%%%%%%%%%%%%%%%%%%%%%%%%%%%%%%
%%%%%%%%% for pagestyle options
%\pagestyle{empty}
%\pagestyle{plain}
%\pagestyle{headings}
%\pagestyle{myheadings}
%%%%%%%%%%%%%%%%%%%%%%%%%%%%%%%%%%%%%%%%%%%%%%%%%%%
\usepackage{enumerate}   %%%%%%  for easier changes in enumeration style
%%%%%%%%%%%%%%%%%%%%%%%%%%%%%%%%%%%%%%%%%%%%%%%%%%%
%%%%%%%%%%%%%%%%%%%%%%%%%%%%%%%%%%%%%%%%%%%%%%%%%%%
\usepackage{url,cite,fancyheadings}
%\usepackage[top=2cm,left=2cm,right=4cm]{geometry}
%%%%%%%%%%%%%%%%%%%%%%%%%%%%%%%%%%%%%%%
%%%%%%%%%%%%%%%%%%%%%%%%%%%%%%%%%%%%%%%%%%%%%%%%%%%%%%%%%%%%%%%%%
%%%%%%%  For easily changing enumeration parameters%%%%%%%%%%%%%%%
%%%%%%%%%%%%%%%%%%%%%%%%%%%%%%%%%%%%%%%%%%%%%%%%%%%%%%%%%
\usepackage{enumerate}
%%%%%%%%%%%%%%%%%%%%%%%%%%%%%%%%%%%%%%% Description of usage %%%%%%%%%%%%%%%%%%%%%%%%%%%%%%%%%%%%%%%%%%%%%%%%%%%%%%%%
%\begin{enumerate}[(I)]
%\begin{enumerate}[(a)]
%%%%%%%%%%%%%%%%%%%%%%%%%%%%%%%%%%%%%%%
%%%% Found on some blog:  \enumitem  (should look it up)
%%%%%%%%%%%%%%%%%%%%%%%%%%%%%%%%%%%%%%%
%%%%%%%%%%%%%   added 11 Aug 2013 %%%%%%%%%%%%
%%%%%%%%%%%%%%%%%%%%%%%%%%%%%%% For asymptote %%%%%%%%%%%%
\usepackage{asymptote}
%%% Usage hint
%\begin{asy}  % no blank line after this
%\end{asy} % on a line by itself with no trailing commands/comments
%%%%%%%%%%%%%%%%%%%%%%%%%%%%%%%%%%%%%%%
%%%%%%%%%%%%%%%%%%%%%%%%%%%%%%%%%%%%%%%
%%%%%%%%%  Choose the first two lines for the latex article class,
%%%%%%%%%%  the next two for the AMS-article class.
%\documentclass[draft,10pt]{article}%{amsart}
\usepackage{amsmath,amssymb,amscd,mathrsfs,latexsym}
%\input{article-title}
%%%%%%%%%%%%%%%%%%%%%%%%%%%%%%%%%%%%%%%
%\documentclass[final]{amsart}
%\usepackage{amssymb}   % required only for \gneq and \lneq  
%\input{amsart-title}
%%%%%%%%%%%%%%%%%%%%%%%%%%%%%%%%%%%%%%%
%%%%%%%%%%%  for anti-diagonal dots %%%%%%%%
\usepackage{mathdots}
%%%%%%%%%%%%%%%%%%%%%%%%%%%%%%%%%%%%%%%
%%%%%%%%%%%%%%%%%%%%%%%%%%%%%%%%%%%%%%%

%%%%%%%%%%  There are three things to choose %%%%%%%%%%%%%%%%%%%

\usepackage{ifthen}     %%%%%   to enable swithcing between choices %%%

%%   1. \version (private or public):   if private is chosen certain
%%   marginal and footnoted comments will appear;  these typically
%%   provide proofs of results quoted without proof in the text.
%%   In lecture notes,  these maybe solutions to exercises (which
%%   you do not want the students to see).   
%%   default: public
%%   
%%   It is easiest to have the following line before initializing is loaded:
%% \newcommand\version{private} 

%%   2. \wantcolor (yes or no):    if ``no'' is chosen,  color
%%   will go away.  One good reason to turn color off:  when
%%   you want to print on a (non-color) printer.
%%   default:  yes

%\newcommand\wantcolor{no}

%%   3. \finalized (yes or no):  unless ``yes'',  the writing is considered
%%   draft.   In a draft labels and other stuff useful for composition
%%   will appear on the margins.    Ideally this flag should be picked up
%%   from the option passed to the documentclass,  but I don't know
%%   how to do that yet.  
%%   default: no  (this is set by  \providecommand\finalized{no}
%%                 at the beginning of the collection of the commands
%%                 that change with \finalized).

%\newcommand\finalized{yes}

%%%%%%%%%%%%%%%%%%%%%%%
\newcommand\pubpri[2]{%
\ifthenelse{\equal{\version}{public}}%
{{#1}}%
{\ifthenelse{\equal{\finalized}{no}}{\marginpar{\scshape\small Pubpri Alert}{#2}}{#2}}{}}
%%%%%%%%%%%%%%%%%%%%%%%%%%%%%%%%%%%%%%%%%%%%%%%%%%%%%%%%%%%%%%%
\newcommand\pubprinoalert[2]{%
\ifthenelse{\equal{\version}{public}}%
{{#1}}%
{#2}}
%%%%%%%%%%%%%%%%%%%%%%%%%%%%%%%%%%%%%%%%%%%%%%%%%%%%%%%%%%%%%%%

%%%%%%%%%%%%%%%%%%%%%%  A useful definition %%%%%%%%%%%%%%%%%%%%%%%%%
\newcommand\ignore[1]{}
%%%%%%%%%%%%%%%%%%%%%%%%%%%%%%%%%%%%%%%%%%%%%%

%%%%%%%%%%%%%%%%%%%	FOR COLOR	%%%%%%%%%%%%%%%%%%%%%%%%%%%%%
\usepackage{color}
%%%%%%%%%    setting default:  %%%%%%%%%%%%%%%%%%%%%%%
\providecommand\wantcolor{yes}   %  
%%%%%%%%%%%%%%%%%%%%%%%%%%%%%%%%%%%%%%%%%%%%%%%%%%%%%
\definecolor{backgroundyellow}{cmyk}{.2,.1,.8,.2}
\definecolor{backgroundblue}{rgb}{0,0,1}
\definecolor{backgroundred}{rgb}{1,0,0}
\definecolor{backgroundmagenta}{cmyk}{0,1,0,0}
%\definecolor{backgroundyellow}{cmyk}{.2,.1,.3,.2}
%\definecolor{backgroundyellow}{cmyk}{.3,.1,.9,.2}
%\definecolor{backgroundgray}{gray}{.7}
%\pagecolor{backgroundyellow}
%\pagecolor{backgroundblue}
%\pagecolor{backgroundred}
%\pagecolor{backgroundmagenta}
%%%%%%%%%%%%%%%%%%%%%  To disable color (this is required for example
%%%%%%%%%  when you need a copy to print, not just to view %%%%%%%%%
%\renewcommand\color[1]{}
\ifthenelse{\equal{\wantcolor}{no}}{\renewcommand\color[1]{}}{}

%%%%%%%%%%%%%%%%%%%	END COLOR %%%%%%%%%%%%%%%%%%%%%%%%%%%%%

%%%%%%%%%%%%%%%%%%%	CUSTOMIZING SECTIONING	%%%%%%%%%%%%%%%%%%%%%%%%%%%%%

\newcommand\mysubsubsection[1]{%
		\subsubsection{\sffamily\upshape\mdseries #1}}

\newcommand\mysss{\mysubsubsection}

%%%%%%%%%%%%%%%%%%	END SECTIONING  %%%%%%%%%%%%%%%%%%%%%%%%%%%%%
%%%%%%%%%%%%%%%%%%	For numbering figures by
%%%%%%%%%%%%%%%%%%section %%%%%%%%%%%%%%%%%%%%%%%%%%%%%
\usepackage{chngcntr}
\counterwithin{figure}{section}
%%%%%%%%%%%%%%%%%%	End numbering figures  %%%%%%%%%%%%%%%%%%%%%%%%%%%%%

%%%%%%%%%%%%%%%%%%%%%%%%%%%%%%%%%%%%%%%%%%%%%%%%%%%
\usepackage{url,hyperref}  %%%%%%%%% for typesetting urls
\usepackage{hyperref}
\hypersetup{colorlinks=true,
linkcolor=blue,
filecolor=magenta,
urlcolor=cyan,}
\urlstyle{same}
%%%%%%%%%%%%%%%%%%%%%%%%%%%%%%%%%%%%%%%%%%%%%%%%%%%

%%%%%%%%%%%%%%%For numbering theorems %%%%%%%%%%%%%%
\providecommand{\theoremnumbering}{document}

%%%%%%%%%%%%%%%%%%	THEOREMS, DEFINITIONS, ETC %%%%%%%%%%%%%%%%%%%%%%%%%%%
%\newtheorem{annotation}{Annotation}[section]
\ifthenelse{\equal{\theoremnumbering}{section}}{\newtheorem{annotation}{Annotation}[section]}%
{\ifthenelse{\equal{\theoremnumbering}{subsection}}{\newtheorem{annotation}{Annotation}[subsection]}{\newtheorem{annotation}{Annotation}}}
\newtheorem{theorem}[annotation]{%\color{blue}
		Theorem}
\newtheorem{lemma}[annotation]{%\color{blue}
		Lemma}
\newtheorem{definition}[annotation]{%\color{magenta}
		Definition}
\newtheorem{corollary}[annotation]{%\color{blue}
		Corollary}
\newtheorem{proposition}[annotation]{%\color{cyan}
		Proposition}

\newtheorem{example}[annotation]{%\color{yellow}
		Example}
\newcommand\bexample{\begin{example}\begin{rm}}
\newcommand\eexample{\end{rm}\hfill$\Box$\end{example}}
\newtheorem{examplenobox}[annotation]{%\color{yellow}
		Example}
\newcommand\bexamplenobox{\begin{examplenobox}\begin{rm}}
\newcommand\eexamplenobox{\end{rm}\end{examplenobox}}
\newtheorem{exercise}[annotation]{%\color{yellow}
		Exercise}
\newcommand\bexercise{\begin{exercise}\begin{rm}}
\newcommand\eexercise{\end{rm}\end{exercise}}
\newtheorem{notation}[annotation]{%\color{yellow}
		Notation}
\newcommand\bnotation{\begin{notation}\begin{rm}}
\newcommand\enotation{\end{rm}\end{notation}}

\newtheorem{remark}[annotation]{%\color{yellow}
		Remark}
\newcommand\bremark{\begin{remark}%\begin{mdseries}\begin{sffamily}
\begin{upshape}}
\newcommand\eremark{\end{upshape}%\end{sffamily}\end{mdseries}
\end{remark}}
\newenvironment{remark*}{%
\par\noindent{\scshape 
  Remark: }\begin{rm}}{\hfill\end{rm}\newline} 
\newcommand\bremarkstar{\begin{remark*}}
\newcommand\eremarkstar{\end{remark*}}
\newcommand\bdefn{\begin{definition}%\begin{mdseries}\begin{sffamily}
\begin{upshape}}
\newcommand\edefn{\end{upshape}%\end{sffamily}\end{mdseries}
\end{definition}}
\newtheorem{caveat}[annotation]{%\color{yellow}
		Caveat}
\newcommand\bcaveat{\begin{caveat}%\begin{mdseries}\begin{sffamily}
\begin{upshape}}
\newcommand\ecaveat{\end{upshape}%\end{sffamily}\end{mdseries}
\end{caveat}}
\newenvironment{caveatstar}{%\color{yellow}
\par\noindent{\scshape\bfseries
  Caveat: }\begin{rm}}{\end{rm}\newline} 
\newcommand\bcaveatstar{\begin{caveatstar}}%\begin{mdseries}\begin{sffamily}
\newcommand\ecaveatstar{\end{caveatstar}}
\newenvironment{myproof}{%
\par\noindent{\scshape 
  Proof: }\begin{rm}}{\hfill$\Box$\end{rm}\newline} 
\newcommand\bmyproof{\begin{myproof}}
\newcommand\emyproof{\end{myproof}}
\newenvironment{myproofnobox}{%
\par\noindent{\scshape Proof: }\begin{rm}}{\end{rm}\hfill\newline}
\newcommand\bmyproofnobox{\begin{myproofnobox}}
\newcommand\emyproofnobox{\end{myproofnobox}}
%{\subsubsection{\sffamily\upshape\mdseries Theorem}}\itshape}{}
\newenvironment{solution}{%
\par\noindent{\scshape Solution: }\begin{rm}}{\hfill$\Box$\end{rm}\newline}
\newenvironment{solutionnobox}{%
%\par\noindent{\scshape Solution:}\begin{rm}}{\end{rm}\hfill\newline}
\par\noindent{\scshape Solution: }\begin{rm}}{\end{rm}}
\newcommand\bsolution{\begin{solution}\begin{rm}}
\newcommand\esolution{\end{rm}\end{solution}}
\newcommand\bsolutionnobox{\begin{solutionnobox}\begin{rm}}
\newcommand\esolutionnobox{\end{rm}\end{solutionnobox}}
%%%%%%%%%%%%%%%%%%%%%%%%%%%%%%%%%%%%%%%%%%
%%%%%%%%%%%%%%%%   Saving an option,   from afs_report of afs-II at iiser tvm, May 2015 %%%%%%%%
%%%%%%%%%%%%%%%%%%%%%%%%%%%%%%%%%%%%%%%%%%
%	\renewenvironment{solution}{%
%	\par\noindent{\sffamily Solution:
%	}\begin{rm}}{\hfill$\Box$\end{rm}\newline}%\textheight=9in
%	\renewenvironment{solutionnobox}{%
%	%\par\noindent{\scshape Solution:}\begin{rm}}{\end{rm}\hfill\newline}
%	%\show\textheight
%	\par\noindent{\sffamily Solution: }\begin{rm}}{\end{rm}}
%%%%%%%%%%%%%%%%%%%%%%%%%%%%%%%%%%%%%%%%%%
%%%%%%%%   Defining short cuts for theorem, lemma, etc %%%%%%%%%%%%
\newcommand\bthm{\begin{theorem}}
\newcommand\ethm{\end{theorem}}
\newcommand\bcor{\begin{corollary}}
\newcommand\ecor{\end{corollary}}
\newcommand\blemma{\begin{lemma}}
\newcommand\elemma{\end{lemma}}
\newcommand\bprop{\begin{proposition}}
\newcommand\eprop{\end{proposition}}
\newcommand\beqn{\begin{equation}}
\newcommand\eeqn{\end{equation}}
\newcommand\beqnstar{\begin{equation*}}
\newcommand\eeqnstar{\end{equation*}}
%%%%%%%%%%%%%%%%%%%%%%%%%%%%%%%%%%%%%%%%%%%%%%%%%%%%%%
%%%%%%%%%%%%%%%%%%%   For numbering equations %%%%%%%%%%%%%%%%%%%%%%%%%%%%
%\numberwithin{equation}{section}
%%%%%%%%%%%%%%%%%%%%%%%%%%%%%%%%%%%%%%%%%%%%%%%%%%%%%%%%%%%%%%%%%%%%%%%%%%%%
%%%%%%%%%%%%%%%%%%%%%%%%%%%%%%%%%%%%%%%%%%%%%%%%%%%
%%%%%%%%%%%%%%%%%%%%%%%%%%%%  for getting theorems without numbering %%%%%%%%%%%%%%%%%%%%%%%
\usepackage{amsthm}
%%%%%%%%%%%%%%%%%%%%%%%%%%%%%%%%%%%%%%%%%%%%%%%%%%%

%%%%%%%%%%%%%%%%%%%%%%%%%%%%%%%%%%%%%%%%%%%%%%%%%%%

%%%%%%%%%%%%%%%%%%  END THEOREMS, DEFINITIONS, ETC %%%%%%%%%%%%%%%%%%%%%%%%%%%

%%%%%%%%%%  For margin titles %%%%%%%%%%%%%%%%%%%%%%%
\newcommand\mtitle[1]%
%{\marginpar{\sffamily\raggedright\upshape\tiny #1}}
{\marginpar{\sffamily\raggedright\upshape\small #1}}

%%%%%%%%%%%%%%%%%%%%%%%%%%%%%%%%%%%%%%%%%%%%%%%%%%%%%%%%%%%%%%%%%%%%%%%%%%%
%%%%%%%%%  These commands change on finalization of the manuscript  %%%%%
%%%%%%%%%%%%%%%%%%%%%%%%%%%%%%%%%%%%%%%%%%%%%%%%%%%%%%%%%%%%%%%%%%%%%%%%%%%
%%%%%%%%%%%%%%%%%%%%%%%%%%%%%%%%%%%%%%%%%%%%%%%%%%%%%%%%%%%%%%%%%%%%%%%%%%
\providecommand\finalized{yes}
\ifthenelse{\equal{\finalized}{yes}}{}{\marginparwidth=2cm}
%%%%%%%%%%%%%%%%%%%%%%%%%%%%   To show labels %%%%%%%%%%%%%%%%%%%%%%%%%%%%%%
\ifthenelse{\equal{\finalized}{yes}}%
{}%
{}
%%%%%%%%%%%%%%%%%%%%%%%%%%%%%%%%%%%%%%%%%%%%%%%%%%%%%%%%%%%%%%%%%%%%%%
%%%%%%%%%%%%%%%%%%%%%%%%%%%%  To indicate checked portions %%%%%%%%%%%%%%%
\ifthenelse{\equal{\finalized}{yes}}%
{\newcommand\checked[1]{}}%
{\newcommand\checked[1]{\marginpar{[{\ttfamily\upshape\tiny CHECKED: #1}]}}}
%%%%%%%%%%%%%%%%%%%%%%%%%%%%%%%%%%%%%%%%%%%%%%%%%%%%%%%%%%%%%%%%%%%%%%
%%%%%%%%%%%%%%%%%%%%%%%%%%%%  To indicate spell checked portions %%%%%%%%%%%%%%%
\ifthenelse{\equal{\finalized}{yes}}%
{\newcommand\spellchecked[1]{}}%
{\newcommand\spellchecked[1]{\marginpar{[{\ttfamily\upshape\tiny SPELLCHECKED: #1}]}}}
%%%%%%%%%%%%%%%%%%%%%%%%%%%%%%%%%%%%%%%%%%%%%%%%%%%%%%%%%%%%%%%%%%%%%%

%%%%%%%%%%%%%%%%%%%%%%%%%%%%%%%%%%%%%%%%%%%%%%%%%%%%%%%%%%%%%%%%%%%%%%
%%%%%%%%%  These commands change on with \version (private or public)
%%%%%%%%%%%%%%%%%%%%%%%%%%%%%%%%%%%%%%%%%%%%%%%%%%%%%%%%%%%%%%%%%%%%%%%%%%%
%%%%%%%%%%%%%%%%%%%%%%%%%%%%%%%%%%%%%%%%%%%%%%%%%%%%%%%%%%%%%%%%%%%%%%%%%%
\providecommand\version{public}   %  setting default  
%%%%%%%%%%%%%%%%%%%%%   To show marginal comments %%%%%%%%%%%%%%%%%%
\ifthenelse{\equal{\version}{public}}%
{\newcommand\mcomment[1]{}}%
{\newcommand\mcomment[1]{\marginpar{{\raggedright\sffamily\upshape\small
\begin{spacing}{0.75} #1\end{spacing}}}}}
%%%%%%%%%%%%%%%%%%%%%%%%%%%%%%%%%%%%%%%%%%%%%%%%%%%%%%%%%%%%%%%%%%%%%%
%%%%%%%%%%%%%%  For comments as footnotes %%%%%%%%%%%%%%%%%%%%%%
\ifthenelse{\equal{\version}{public}}%
{\newcommand\fcomment[1]{}}%
{\newcommand\fcomment[1]{\footnote{#1}}}
%%%%%%%%%%%%%%%%%%%%%%%%%%%%%%%%%%%%%%%%%%%%%%%%%%%%%%%%%%%%%%%%%%%%%%
%%%%%%%%%%%%%%%%%%%%%%%%%%%%%%%%%%%%%%%%%%%%%%%%%%%%%%%%%%%%%%%%%%%%%%
%%%%%%%%%%%%%%  For comments in text %%%%%%%%%%%%%%%%%%%%%%
\ifthenelse{\equal{\version}{public}}%
{\newcommand\comment[1]{}}%
{\newcommand\comment[1]{{\small #1}}}
%%%%%%%%%%%%%%%%%%%%%%%%%%%%%%%%%%%%%%%%%%%%%%%%%%%%%%%%%%%%%%%%%%%%%%
%%%%%%%%%%%%%%%%%%%%%%%%%%%%%%%%%%%%%%%%%%%%
%%%%%   For cutting enumerates in two.   (Learnt this in response to
%%%%%   a question from Murali Vemuri (August 2006):  How to pick up the
%%%%%   the numbering in a new enumerate environment from where some previous
%%%%%   left off?)

%%  A newcounter is defined right now
             % new counter defined
%%%  Put this line (uncommented of course) in at the end of the 
%%%  last item in an enumerate
%%%  so that the label is stored.
%%%  
     %   \setcounter{myenumi}{\theenumi}  % storing away the value of 

%%%  Now put this line (uncommented of course) in before 
%%%  the first item in the new enumerate
%%%  so that the stored label is recalled.

%\setcounter{enumi}{\value{myenumi}}  % setting the enumi counter to
%%%%%%%%%%%%%%%%%%%%%%%%%%%%%%%%%%%%%%%%%%%%%%%%%%%%%%%%%%%%%%%%%%%%
%%%%%%%%%%%%%%%%%%%%%%%%%%%%%%%%%%%%%%%%%%%%%%%%%%%%%%%%%%%%%%%%%%%%
%%%%%%%%%%%%%%%%%%%%%%%%%%%%%%%%%%%%%%%%%%%%%%%%%%%%%%%%%%%%%%%%%%%%

%%%%%%%%%%%%%%%%%%%   Reading definitions in %%%%%%%%%%%%%%%%%%%%%%%%%%%%
%\input{definitions}
%%%%%%%%%%%%%%%%%%%%%%%%%%%%%%%%%%%%%%%%%%%%%%%%%%%%%%%%%%%%%%%%%%%%%%%%%%%%
%use \textbackslash for backslash in text
%use \textasciitilde for tilde in text
%%%%%%%%%%%%%%%%%%%%%%%%%%%%%%%%%%%%%%%%%%%%%%%%%%%%%%%%%%%%%%%%%%%%%%%%%%%%

%\typein[\includeonly]
%{Please enter the files to be included (for \protect\includeonly),  thank you:}

\choosefont{rsfso}    %some options: times, pslatex, palatino,
%%%%% other possibilities of packages include:  rsfso, urwchancal, dsfont, yfont, eufon
%\pagecolor{gray}

\usepackage{amsthm}
\usepackage{empheq}
\usepackage{accents}
\usepackage[all, poly]{xy}
\usepackage[utf8]{inputenc}
\usepackage{mathtools}
\usepackage{soul}
\usepackage{tikz}
\usetikzlibrary{matrix,arrows,decorations.pathmorphing}
\usepackage[left=2.5cm,right=2.5cm,top=2cm,bottom=2cm,includeheadfoot]{geometry}
%\usepackage{cite}
%\usepackage[cmtip,arrow]{xy}
%\usepackage{lamsarrow}
%\usepackage{pb-diagram}
%\usepackage{comment}
%\usepackage{showkeys}

%%%%%%%%%%%%%%%%%%%%%%%%%%%%%%%%%%%
%
% Margin Settings
%
%%%%%%%%%%%%%%%%%%%%%%%%%%%%%%%%%%%
%\advance\textwidth by 1.2in  \advance\oddsidemargin by -.6in \advance\evensidemargin by -.6in
%\parskip=.1cm
%\leftmargin=0in
%\topmargin=0pt % was 18pt
%\headheight=0pt % was 20 pt
%\oddsidemargin=0in % was .250in
%\evensidemargin=0in % was .250in
%\textheight=8.75in % was 9.0in
%\textwidth=6.4in % was 6.5in
%\parindent=0.0cm
%\headsep=0.25in % was 20pt
%\widowpenalty=1000
%%%%%%%%%%%%%%%%%%%%%%%%%%%%%%%%%%%%%%%%%%%%
%%%%%%%%%%%%%%%%%%%%%%%%%%%%%%%%%%%%%%%%%%%%

\theoremstyle{plain}

%\newtheorem{theorem}{Theorem}[section]

%\newtheorem{cor}{Corollary}[section]
%\newtheorem{conj}{Conjecture}[section]

%%%%%%%%%%%%%%%%%%%%%%%%%%%%%%%%%%%%%%%%%%%%
%%%%%%%%%%%%%%%%%%%%%%%%%%%%%%%%%%%%%%%%%%%%

%%%%%%%%%%%%%%%%%%%%%%%%%%%%%%%%%%%%%%%%%%%%
%%%%%%%%%%%%%%%%%%%%%%%%%%%%%%%%%%%%%%%%%%%%

\theoremstyle{remark}
%\newtheorem{notation}{Notation}
%\newtheorem*{warning}{Warning}

%%%%%%%%%%%%%%%%%%%%%%%%%%%%%%%%%%%%%%%%%%%%
%%%%%%%%%%%%%%%%%%%%%%%%%%%%%%%%%%%%%%%%%%%%

\makeatletter
\newcommand{\ostar}{\mathbin{\mathpalette\make@circled\star}}
\newcommand{\make@circled}[2]{%
  \ooalign{$\m@th#1\smallbigcirc{#1}$\cr\hidewidth$\m@th#1#2$\hidewidth\cr}%
}
\newcommand{\smallbigcirc}[1]{%
  \vcenter{\hbox{\scalebox{0.77778}{$\m@th#1\bigcirc$}}}%
}
\makeatother

\newtheorem{procedure}{Procedure}

%%NEW START

\newcommand{\integers}{\mathbb{Z}}

\newcommand{\lie}{\mathfrak}
\newcommand{\wnot}{w_0}

\renewcommand{\iff}{\Leftrightarrow}

\newcommand{\rootthree}{1.7320508}

\DeclareMathOperator{\demcrys}{Dem}

\DeclareMathOperator{\U}{\mathfrak{U}}

\DeclareMathOperator{\GT}{GT}
\DeclareMathOperator{\GTz}{GT_\integers}
\DeclareMathOperator{\kogan}{K}
\DeclareMathOperator{\koganz}{K_\integers}

\DeclareMathOperator{\len}{len}
%\DeclareMathOperator{\sd}{\partial}
%\DeclareMathOperator{\bsd}{{\boldsymbol \partial}}

% \DeclareMathOperator{}{}
% \DeclareMathOperator{}{}
% \DeclareMathOperator{}{}
% \DeclareMathOperator{}{}
% \DeclareMathOperator{}{}

%%%%%%%%%%%%%%%%%%% DEFINITION FROM KNR %%%%%%%%%%%%%%%%%%%%%%%%%%

%%%%%%%%%%%%%%%%%%%%%
%%%%    paths:   pi, eta, zeta, theta
%%%%	weights: lambda, mu, nu, kappa
%%%%	directions:  tau, varphi
%%%%	weyl group element:  sigma, w, u, v
%%%%%%%%%%%%%%%%%%%%%%%%%
\newcommand\tensor\otimes

\newcommand\lamdba\lambda
\newcommand\kostant{K}
\newcommand\klwm{\kostant(\lambda,w,\mu)}

\newcommand\klmw\klwm

%\newcommand\jswminsw{J_{\sigma W_1}(w\meet sw)}

%\newcommand\kswminsw{K_{\sigma W_1}(w\meet sw)}

%%%%%%%%%%%%%%%%%%%%%%%%%%%%%%%%%%%%%%%%%%%

%\newcommand\pmconcat{\paths_{\lambda_1}\concat\cdots\concat\paths_{\lambda_n}}

%\newcommand{\joins}{\join_s}

%\newcommand\pathsmw{\paths_{\mu,w}}

%\newcommand\pathsmw{\paths_{\mu,w}}

\newcommand\biject\Psi
\newcommand\cleq\prec
\newcommand\cgeq\succ

%\newcommand\wordr{\mathbf{w}_r}

%\newcommand\wordc{\mathbf{w}_c}

%\newcommand\wordp{\mathbf{w}_p}

%%%%%%%%%%%%%%%%%%%%%%%%%%%%%%%%%%%%%%%%%%%%prv_g2.tex %%%%%%%%%%%%%%%%%%%%%%%%%%%%%%%%%%%%%%%%%%%
%\hyphenation{des-crip-tion re-pré-sen-te o-ri-gi-naux pas-quier
%  do-mi-nant u-ti-li-sant gé-né-ra-li-sées par-tha-sa-ra-thy
 % ge-ne-ra-li-sa-tions ex-tré-ma-li-té ex-tré-ma-li-té lak-shmi-bai
%  e-xis-tan-ce pro-blè-me cor-res-pon-dants suf-fi-san-te Lak-shmi-bai
%ex-tré-ma-li-té}  

%\def\pi{\uppi}
%\newcommand\scal[2]{\langle #1 , #2\rangle}

%%%NEW END
\providecommand\tensor\otimes

%%%%%%%%%%%%%%%%%%%%%%%%%%%%%%%%%%%%%%%%%%%%
%%%%%%%%%%%%%%%%%%%%%%%%%%%%%%%%%%%%%%%%%%%%

\newcounter{cnt}
 \makeatletter
\def\mydggeometry{\makeatletter\dg@YGRID=1\dg@XGRID=20\unitlength=0.003pt\makeatother}
\makeatother \theoremstyle{remark}

% to make the notation environment unnumbered

%%%%%%%%%%%%%%%%%%%%%%%%%%%%%%%%%%%%%%%%%%%%
%%%%%%%%%%%%%%%%%%%%%%%%%%%%%%%%%%%%%%%%%%%%

\numberwithin{equation}{section}
\makeatletter
\def\section{\def\@secnumfont{\mdseries}\@startsection{section}{1}%
  \z@{.7\linespacing\@plus\linespacing}{.5\linespacing}%
  {\normalfont\scshape\centering}}
\def\subsection{\def\@secnumfont{\bfseries}\@startsection{subsection}{2}%
  {\parindent}{.5\linespacing\@plus.7\linespacing}{-.5em}%
  {\normalfont\bfseries}}
\makeatother

%%%%%%%%%%%%%%%%%%%%%%%%%%%%%%%%%%%%%%%%%%%%
%%%%%%%%%%%%%%%%%%%%%%%%%%%%%%%%%%%%%%%%%%%%
%%%%%%%%%%%%%%%%%BEGINNING OF DOCUMENT%%%%%%%%%%%%%%%%%%%%

\begin{document}

\title[Gelfand-Tsetlin model of Kostant-Kumar modules]{Gelfand-Tsetlin crystals of  Kostant-Kumar modules}

\author[]{Mrigendra Singh Kushwaha}
\address{Department of Mathematics, Faculty of Mathematical Science, University of Delhi, New Delhi - 110007}
\email{mrigendra154@gmail.com, mskushwaha@maths.du.ac.in}

\subjclass[2010]{17B67, 17B10, 05E10}
\keywords{Gelfand-Tsetlin polytope, Demazure crystals, Kogan faces, BiKogan faces}
\date{}

\begin{abstract}
We give Gelfand-Tsetlin crystals for the Kostant-Kumar modules for the finite simple Lie algebra of type $A$. Kostant-Kumar modules are cyclic submodules of the tensor product of two irreducible highest weight modules of a symmetrizable Kac-Moody Lie algebras. In this case (type $A$), we also provide a polytopal model for Kostant-Kumar modules in terms of \emph{BiKogan faces}.
\end{abstract}
 
\maketitle 

%%%%%%%%%%%%%%%%%%%%%%%%%%%%%%%%%%%%%%%%%%%%%%%%%%%%%%%%%%%%%%%%%%%%%%%%%%%%%%%%%%%%%%%%%%%%%%%%%%%%%%%%%%%%%%%%%%%%%%%%%%%%%%%%%%%%%%%%
 \section{Introduction}
\subsection{} Let $\lie g$ be a symmetrizable Kac-Moody Lie algebra, $\lie h$ be a Cartan subalgebra, $\lie b$ be the Borel subalgebra containing $\lie h$, $\lie b^-$ be the opposite Borel subalgebra to $\lie{b}$, $P$ be the weight lattice, $P^+$ be the set of dominant integral weights and $W$ be the Weyl group of $\lie g$. Let $V(\lambda)$ denote the irreducible highest weight module of $\lie g$ for $\lambda \in P^+$. Given $\lambda,\mu \in P^+$ and $w \in W$, the Kostant-Kumar (KK) module $K(\lambda,w,\mu)$ is a cyclic submodule of $V(\lambda)\otimes V(\mu)$ generated by the vector $v_\lambda \otimes v_{w\mu}$ (see \cite{KRV}), that is
\begin{equation}
    K(\lambda,w,\mu):= \lie{Ug}(v_\lambda \otimes v_{w\mu}),
\end{equation}
where $v_\lambda$ is the highest weight vector of $V(\lambda)$, $v_{w\mu}$ be a non zero weight vector of weight $w\mu$ in $V(\mu)$ and $\lie{Ug}$ be the universal enveloping algebra of $\lie g$. The following results are well-known (see \cite{KRV}):
\begin{enumerate}
    \item $K(\lambda, 1,\mu) \cong V(\lambda + \mu) $ where $1$ denote the identity element of $W$,
    \medskip\item $K(\lambda,w_0,\mu) = V(\lambda)\otimes V(\mu) $ where $w_0$ is the longest element of $W$,
    \medskip\item\label{kk:it3} $K(\lambda,w,\mu)\subseteq K(\lambda,w',\mu)$ if $w \leq w'$ in the Bruhat order in $W$.
\end{enumerate}
The item \eqref{kk:it3} shows that KK submodules form a filtration of $V(\lambda)\otimes V(\mu)$ with respect to the Bruhat order in $W$. Further results on KK-modules can be found in the following references \cite{kumar:refineprv, Kumar, mathieu, KRV, KRV_Sat}.

In our paper \cite{KRV}, we gave a path model for the KK-modules in terms of Lakshmibai-Seshadri (LS) paths provided $\lie g$ is either symmetric or of finite type, which is a generalization of the Littelmann's path model for the tensor product of two irreducible highest weight modules of $\lie g$ \cite{litt:inv, LittelmannAnnals}.

In this paper, we consider $\lie g = \lie{sl}_n(\mathbb{C})$ the set of $n\times n$ complex matrices with trace zero and provide a polytopal model for the KK-modules in terms of pairs of Gelfand-Tsetlin patterns (see theorem \ref{thm:gt-mod}) which is a very simple description. It is a generalization of well known polytopal model in terms of pairs of Gelfand-Tsetlin patterns for the tensor product of two irreducible highest weight modules of $\lie g$. The crucial ingredient for the above polytopal model of KK-modules is an association of permutation $\lie p(P,Q)$ to a given pair of GT patterns $(P,Q)$. The description of $\lie{p}(P,Q)$ is visually simple and a procedure for it is stated in section \ref{subsec:asso-perm}.% Determining $\lie p (P,Q)$ is visually a simple procedure, stated in the section \ref{subsec:asso-perm}. 
 
\subsection{} We recall that for $w \in W$ and $\mu \in P^+$, the {\em Demazure module} $V_w(\mu)$ is the cyclic $\U\lie b$-submodule and the {\em opposite Demazure module} $V^w(\mu)$ is the cyclic $\U\lie b^-$-submodule of $V(\mu)$ generated by a nonzero vector $v_{w\mu}$ of weight $w\mu$, where $\U\lie b$ and $\U\lie b^-$ are the universal enveloping algebra of $\lie{b}$ and $\lie{b}^-$ respectively. 

In section \ref{s:bikogan}, we define \emph{BiKogan faces} of a product of two Gelfand-Tsetlin polytopes, in the same spirit of \emph{Kogan faces} and \emph{dual Kogan faces} (see \cite{fujita}, \cite{kogan}) of a Gelfand-Tsetlin polytope. A certain union of \emph{Kogan faces} (resp. \emph{dual Kogan faces}) gives a polytopal model for \emph{Demazure crystals} (resp. \emph{opposite Demazure crystals}). In the same spirit, our polytopal model for the KK-modules is described as a certain union of \emph{BiKogan faces} (see Theorem \ref{thm:Bikogan}). 

\subsection{Acknowledgement} I thank Sankaran Viswanath for his insightful suggestions and comments on the draft of the paper. I also extend my thanks to K. N. Raghavan for many useful discussions.

\section{Gelfand-Tsetlin pattern}\label{sec:gtpattern}

\subsection{} Let $\lie g = \lie{sl}_n(\mathbb{C})$. In this case, the Weyl group $W$ can be identified with the permutation group $S_n$ of the set $\{1,2,\ldots,n\}$ and the set of dominant integral weights $P^+$ with the set of integer partitions with $n$-parts such that the $n^{th}$-part is zero. A sequence $\mu = (\mu_1,\mu_2,\ldots,\mu_n) \in \mathbb{Z}^n_{\geq 0}$ is called an integer partition if $\mu_i \geq \mu_{i+1}$ for all $i=1,2,\ldots,n-1$.

There are many combinatorial models for the irreducible representation $V(\mu)$ indexed by $\mu \in P^+$. %Gelfand-Tsetlin patterns were introduced by Gelfand and Tsetlin \cite{GT}. 
The set of integral Gelfand-Tsetlin (GT) patterns of shape $\mu$ is one of the combinatorial models of $V(\mu)$, introduced by Gelfand and Tsetlin \cite{GT}. 

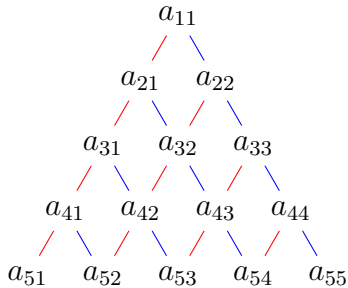
\begin{figure}
\begin{center}
\begin{tikzpicture}[x={(1cm*0.5,-\rootthree cm*0.5)},y={(1cm*0.5,\rootthree cm*0.5)}]
\foreach\i in{0,...,4}
\foreach\j in{\i,...,4}{
  \pgfmathtruncatemacro{\k}{5 - \j + \i};
  \pgfmathtruncatemacro{\l}{\i + 1};
  \draw(\i,\j)node(a\i\j){$a_{\k\l}$};
}
  \foreach\i/\ii in{0/-1,1/0,2/1,3/2,4/3}
 \foreach\j/\jj in{0/-1,1/0,2/1,3/2,4/3}{
  \ifnum\i<\j \draw[color=red](a\i\jj)--(a\i\j); \fi
  \ifnum\i>\j\else\ifnum\i>0 \draw[color=blue](a\ii\j)--(a\i\j);\fi\fi
 }
\end{tikzpicture}\qquad

\caption{Gelfand-Tsetlin array for $n=5$.}
\label{fig:gt-array-aij-specified}
\end{center}
\end{figure}

\subsection{} A GT-pattern of size $n$ is a triangular array $A=(a_{ij})_{n \geq i \geq j \geq 1} \in \mathbb{R}^{n(n+1)/2}$ of real numbers (see Figure~\ref{fig:gt-array-aij-specified}) satisfying the following (``North-East'' and ``South-East'') inequalities for all $1\leq j < i \leq n$: $NE_{ij}(A) = a_{ij} - a_{i-1, j} \geq 0$ (marked by red)   and $SE_{ij}(A) = a_{i-1, j} - a_{i, j+1} \geq 0$ (marked by blue). For a given integer partition $\mu = (\mu_1,\mu_2,\ldots,\mu_n)$  with $n$-parts, let $\GT(\mu)$ denote the set of all GT-patterns of size $n$ with fixed bottom row $a_{ni} = \mu_i$ for $1 \leq i \leq n$. Note that $\GT(\mu)$ is a polytope and it is called the GT-polytope of shape $\mu$. The \textit{weight} of a GT-pattern $A$ of size $n$ is defined as follows: $$wt(A):=(p_1,p_2,\ldots,p_n) \text{ where } p_i = \sum_{j=1}^i a_{ij} -\sum_{j=1}^{i-1} a_{i-1,j}. $$

For example, let $\mu = (5,4,2,1,0)$ and the following be a GT pattern of shape $\mu$,

%\begin{figure}
\begin{center}
\begin{tikzpicture}[x={(1cm*0.5,-\rootthree cm*0.5)},y={(1cm*0.5,\rootthree cm*0.5)}]
  \draw(0,0)node(a00){$5$};\draw(0,1)node(a01){$5$};\draw(0,2)node(a02){$5$};\draw(0,3)node(a03){$3$};\draw(0,4)node(a04){$3$};
  \draw(1,1)node(a11){$4$};\draw(1,2)node(a12){$4$};\draw(1,3)node(a13){$3$};\draw(1,4)node(a14){$2$};
  \draw(2,2)node(a22){$2$};\draw(2,3)node(a23){$2$};\draw(2,4)node(a24){$1$};
  \draw(3,3)node(a33){$1$};\draw(3,4)node(a34){$0$};
  \draw(4,4)node(a44){$0$};

\end{tikzpicture}
\end{center}
the weight of the above GT-pattern is $(3,2,4,2,1)$.

\subsection{} The character of $V(\mu)$ defined by $\textbf{Char}V(\mu) = \sum_{\beta \in P} d_\mu(\beta)\exp^\beta $ where $d_\mu(\beta) = dim \ V(\mu)_\beta$ is the dimension of the $\beta$-weight space.  Let $\GTz(\mu)$ denote the set of all integral points in the polytope $\GT(\mu)$. For a given $\mu \in P^+$, the set of integral GT-patterns $\GTz(\mu)$ of shape $\mu$ is a combinatorial model for the irreducible module $V(\mu)$ and $\textbf{Char}V(\mu) = \sum_{P \in \GTz(\mu)}\exp^{wt(P)}$. By work of Littelmann \cite{litt:inv} \cite{LittelmannAnnals} it is well known that \[ \GTz(\lambda) \times \GTz(\mu) = \{ (P,Q) \ | \ P \in \GTz(\lambda), \ Q \in \GTz(\mu) \}\] is a combinatorial model for the tensor product $V(\lambda) \otimes V(\mu)$ in the sense that: 
\begin{equation}
    \textbf{Char} \ V(\lambda)\otimes V(\mu) =  \sum_{(P,Q) \in \GTz(\lambda) \times \GTz(\mu)} \exp^{wt(P)+wt(Q)}.
\end{equation}

For given $\lambda,\mu \in P^+$ and a permutation $w \in S_n$, we determine a subset $\GTz(\lambda,w,\mu)$ of $\GTz(\lambda) \times \GTz(\mu)$ and we show that $\GTz(\lambda,w,\mu)$ is a combinatorial model for the KK-module $K(\lambda,w,\mu)$ (see Theorem \ref{thm:gt-mod}).

\section{Gelfand-Tsetlin crystal for the KK-module}

\subsection{} We recall here a binary operation $*$ on the permutation group $S_n$ by Deodhar \cite{deo:ca:87} (more generally on the Weyl group). Let $w \in S_n$ and $s$ be a simple reflection in $S_n$ then $w*s := \textbf{max}\{w,ws\}$, where $\textbf{max}\{w,ws\} = w$ if $w \geq ws$ and $\textbf{max}\{w,ws\} = ws$ if $ws \geq w$ in the Bruhat order. The binary operation $*$ is an associative operation, then for any $w,w' \in S_n$ the permutation $w*w'$ is well defined and $w*w' = \textbf{max} \ I(w)I(w')$ \cite[\S 2 Remark 2.9]{KRV}, where $I(w)$ is the Bruhat interval of $w$, that is $I(w) = \{u \in S_n \ | \ u \leq w \}$. 

\begin{definition}[Demazure product]\label{def:dem-prod}
Let $w = s_{i_1}s_{i_2}\cdots s_{i_k}$ be a word which may not be reduced in the simple reflections $s_{i_j}$ for $1 \leq j \leq k$. The Demazure product of $w$ is defined as follows: \[*(w):= s_{i_1}*s_{i_2}*\cdots*s_{i_k}. \]
\end{definition}
For example $*(s_1s_3s_1s_2s_2) = s_3s_1s_2$.

\begin{lemma}\label{lem:maxdeo-str}
    Let $w = s_{i_1}s_{i_2}\cdots s_{i_k}$ be a word which may not be reduced then the set $\{ *(u) \ | \ u \text{ is a subword of } w \}$ has a unique maximal element and the unique maximal element is $*(w)$. 

\end{lemma}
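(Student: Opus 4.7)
The plan is to induct on $k$, the length of the word. The base case $k=0$ is vacuous, since the only subword is the empty word and $*(w)$ is the identity. For the inductive step, set $w' = s_{i_1}\cdots s_{i_{k-1}}$, so that $w = w's_{i_k}$ and, by associativity of $*$, $*(w) = *(w')*s_{i_k}$. Every subword $u$ of $w$ either omits the last letter (so $u$ is a subword of $w'$) or includes it (so $u = u's_{i_k}$ with $u'$ a subword of $w'$, and $*(u) = *(u')*s_{i_k}$).

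In the first case, the inductive hypothesis gives $*(u) \leq *(w')$, and the defining identity $v*s = \textbf{max}\{v,vs\}$ yields $v \leq v*s$ for any simple reflection $s$. Chaining these, $*(u) \leq *(w') \leq *(w')*s_{i_k} = *(w)$.

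The second case requires the monotonicity property: if $a \leq b$ in Bruhat order and $s$ is a simple reflection, then $a*s \leq b*s$. I would deduce this from the interval formula $a*s = \textbf{max}\,I(a)\{e,s\}$, a specialization of the identity $w*w' = \textbf{max}\,I(w)I(w')$ recalled just above the statement of the lemma. Indeed, $a \leq b$ implies $I(a) \subseteq I(b)$, so $I(a)\{e,s\} \subseteq I(b)\{e,s\}$; hence $a*s$ belongs to $I(b)\{e,s\}$ and is therefore bounded above by its maximum $b*s$. Applying this with $a = *(u')$ and $b = *(w')$ (legal by induction) yields $*(u) = *(u')*s_{i_k} \leq *(w')*s_{i_k} = *(w)$.

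Finally, taking $u = w$ itself shows that $*(w)$ belongs to $\{*(u) : u \text{ is a subword of } w\}$, so it is in fact the unique maximum. The one potentially delicate step is the monotonicity of the assignment $a \mapsto a*s$, but once the interval characterization $w*w' = \textbf{max}\,I(w)I(w')$ is in hand this collapses to a one-line containment between two subsets of $S_n$.
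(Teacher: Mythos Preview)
Your induction is correct, and the monotonicity step $a\leq b \Rightarrow a*s\leq b*s$ is justified exactly as you say, via $I(a)\subseteq I(b)$ and the identity $a*s=\textbf{max}\,I(a)\{e,s\}$.

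The paper's argument is shorter and avoids induction altogether: it applies the interval identity $w*w'=\textbf{max}\,I(w)I(w')$ not one letter at a time but globally, writing $*(w)=\textbf{max}\,I(s_{i_1})I(s_{i_2})\cdots I(s_{i_k})$ and likewise for any subword $u$. Since each $I(s_{i_j})=\{e,s_{i_j}\}$ contains the identity, the product for $u$ is visibly a subset of the product for $w$, and taking maxima gives $*(u)\leq *(w)$ in one stroke. Your argument unwinds this same containment recursively, peeling off the last factor and invoking the containment $I(*(u'))\{e,s_{i_k}\}\subseteq I(*(w'))\{e,s_{i_k}\}$ at each stage; the paper's version simply observes the containment of the full $k$-fold products directly. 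Both rest on the same interval characterization, but the direct version is cleaner and makes the role of the subword condition more transparent.
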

\begin{proof}
    Let $u = s_{i_{j_1}}s_{i_{j_2}}\cdots s_{i_{j_t}}$ be a subword of $w$ where $j_s \in \{1,2,\ldots,k \}$ for $s=1,2,\ldots,t$. We know that $*(w) = \textbf{max} \ I(s_{i_1})I(s_{i_2})\cdots I(s_{i_k})$, similarly $*(u) = \textbf{max} \ I(s_{i_{j_1}})I(s_{i_{j_2}})\cdots I(s_{i_{j_t}})$. The above expression implies that $*(u)\leq *(w)$.
\end{proof}

\begin{lemma}\label{lem:dem}
Let $w = s_{i_1}s_{i_2}\cdots s_{i_k}$ be a word which may not be reduced, where $s_{i_j}$ are simple reflections for $1 \leq j \leq k$. Consider $w' = s_{i_1}\cdots s_{i_t}$ and $w'' = s_{i_{t+1}}\cdots s_{i_k}$ for some $t$ such that $1 < t < k$. Then, $*(w) = *(w')*(*(w''))$.
\end{lemma}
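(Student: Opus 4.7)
The plan is to recognize that this lemma is a direct formal consequence of the associativity of the binary operation $*$ on $S_n$, which is stated in the paragraph preceding Definition 3.1. Once associativity is in hand, no further combinatorial input about Bruhat order, subwords, or reduced expressions is required.

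First, I would unfold the definition of the Demazure product on both sides. By Definition 3.1,
\[
*(w) = s_{i_1} * s_{i_2} * \cdots * s_{i_k}, \quad *(w') = s_{i_1} * \cdots * s_{i_t}, \quad *(w'') = s_{i_{t+1}} * \cdots * s_{i_k}.
\]
Then I would appeal to associativity of the binary operation $*$ to insert parentheses at position $t$, obtaining
\[
*(w) = \bigl(s_{i_1} * \cdots * s_{i_t}\bigr) * \bigl(s_{i_{t+1}} * \cdots * s_{i_k}\bigr) = *(w') * *(w''),
\]
which is exactly the claimed identity. A short induction on $k-t$ (or on $t$) can be used to make the re-parenthesization explicit, but this is purely formal since $*$ is associative on $S_n$.

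I do not anticipate any real obstacle. As an alternative route, one could use the characterization $u * v = \textbf{max}\, I(u)I(v)$ recalled in Section 3.1 and the fact that $I(u)I(v)I(w) = \bigl(I(u)I(v)\bigr)I(w)$ as subsets of $S_n$, but this essentially reproves associativity of $*$. Thus the shortest and cleanest presentation is the one above, taking associativity as the single input.
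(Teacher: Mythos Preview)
Your proposal is correct and matches the paper's own justification: immediately after the statement, the paper simply notes that the lemma follows because $*$ is an associative operation. No further argument is given there, so your unfolding of Definition~\ref{def:dem-prod} together with associativity is exactly the intended proof.
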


The above Lemma follows since $*$ is an associative operation.

\subsection{} Now we will state the main theorem. For that, we need the following subset of GT polytope. For a given pair $(P,Q) \in \GT(\lambda)\times \GT(\mu)$ we associate a permutation $\lie{p}(P,Q) \in S_n$ below in equation \eqref{eq:per-asso-GT}. Next we define a subset $\GT(\lambda,w,\mu)$ of $\GT(\lambda)\times \GT(\mu)$ for a given permutation $w$ as follows:
\begin{equation}
    \GT(\lambda,w,\mu):= \{(P,Q) \in \GT(\lambda)\times \GT(\mu) \ | \ \lie{p}(P,Q) \leq w \}
\end{equation}
where $\leq$ is the Bruhat order in $S_n$. We denote by $\GTz(\lambda,w,\mu)$ the set of all integral points in $\GT(\lambda,w,\mu)$. Now we state our main theorem which provides a Gelfand-Tsetlin crystal for KK-modules.

\begin{theorem}\label{thm:gt-mod}

For a given $\lambda,\mu \in P^+$ and $w \in S_n$, the set $\GTz(\lambda,w,\mu)$ is a Gelfand-Tsetlin model for the KK-module $K(\lambda,w,\mu)$ in the sense that 

\begin{equation}
    \textbf{Char} \ K(\lambda,w,\mu) =  \sum_{(P,Q) \in \GTz(\lambda,w,\mu)} \exp^{wt(P)+wt(Q)},
\end{equation}
and $\GTz(\lambda,w,\mu)$ is $e_i,f_i$ invariant where $e_i$ and $f_i$ are raising and lowering crystal operators defined below in the section \ref{sec:rootop-crys}. 

\end{theorem}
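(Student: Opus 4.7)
The plan is to reduce the statement to the Littelmann path model for KK-modules established in our earlier work \cite{KRV} via the classical bijection between integral Gelfand-Tsetlin patterns and Lakshmibai-Seshadri paths (equivalently, semistandard Young tableaux). The two assertions of the theorem (the character identity and the $e_i, f_i$-stability) will both be deduced by transporting structure along this bijection.

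First, I would fix the standard bijection $\Phi : \GTz(\lambda) \times \GTz(\mu) \to \plwm$ where $\plwm$ denotes the set of concatenations $\pi \concat \pi'$ of LS paths $\pi$ of shape $\lambda$ and $\pi'$ of shape $\mu$ (or, equivalently, pairs of SSYT of the corresponding shapes). Under $\Phi$, the weight $wt(P) + wt(Q)$ matches the endpoint of the concatenated path, and the tensor product crystal structure on the right (the Littelmann operators $e_i, f_i$) corresponds to a crystal structure on $\GTz(\lambda) \times \GTz(\mu)$. The character identity for the tensor product recalled in Section~\ref{sec:gtpattern} is already this statement.

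Second, and this is where the main content sits, I would prove the \emph{key compatibility}:
\begin{equation}
    \lie{p}(P,Q) \;=\; *\bigl(\text{reading word of }\Phi(P,Q)\bigr),
\end{equation}
that is, the permutation $\lie p(P,Q)$ read off from the pair $(P,Q)$ via the procedure of Section~\ref{subsec:asso-perm} coincides with the Demazure product associated by \cite{KRV} to the concatenated LS path $\Phi(P,Q)$. Granting this, the characterization $K(\lambda,w,\mu) \subseteq V(\lambda)\otimes V(\mu)$ from \cite{KRV} as the span of those $\pi \concat \pi'$ with Demazure product $\leq w$ in Bruhat order immediately gives
\begin{equation*}
    \textbf{Char}\,K(\lambda,w,\mu) = \sum_{(P,Q) \in \GTz(\lambda,w,\mu)} \exp^{wt(P)+wt(Q)}.
\end{equation*}
I expect to verify the compatibility by induction on the size of the pattern (or on the columns in the SSYT picture), using Lemma~\ref{lem:dem} to split the reading word into two pieces and matching each piece with the column-wise procedure defining $\lie p(P,Q)$; the base case reduces to the well-known fact that the Demazure product of the reading word of a single column is determined by which rows appear.

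Third, for $e_i, f_i$-stability of $\GTz(\lambda,w,\mu)$, I would use the compatibility together with the standard behavior of the Demazure product under a single crystal operator: applying $f_i$ to a path $\pi\concat\pi'$ either preserves or multiplies its Demazure product on the right by $s_i$ in the $*$-sense, and the resulting element is $\leq$ the original one in Bruhat order (and similarly for $e_i$, which if anything decreases the invariant by Lemma~\ref{lem:maxdeo-str}). Hence if $\lie p(P,Q) \leq w$ then $\lie p(f_i(P,Q)) \leq w$ as well, whenever $f_i(P,Q)\neq 0$, and analogously for $e_i$.

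The principal obstacle is the compatibility in step two: the definition of $\lie p(P,Q)$ is intrinsic to the GT pattern (it reads entries triangle-by-triangle), whereas the Demazure product in \cite{KRV} is intrinsic to the path. Matching the two requires a careful comparison of the Deodhar lift used in the GT procedure with the chain-level data of the LS path, and is where most of the genuine combinatorial work lies; Lemma~\ref{lem:dem} is the essential device that makes the inductive comparison feasible.
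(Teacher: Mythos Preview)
Your overall strategy---transport the problem to the LS path model of \cite{KRV} via the crystal isomorphism $\phi\times\phi:\GTz(\lambda)\times\GTz(\mu)\to P_\lambda\ostar P_\mu$---matches the paper's. The decisive step is indeed a compatibility $\lie p(P,Q)=\lie w(\phi(P)\ostar\phi(Q))$ (this is Proposition~\ref{prop:GT-LS}), after which both the character formula and the $e_i,f_i$-stability follow immediately from Theorem~\ref{thm:lspath-mod}.

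Where your proposal diverges is in \emph{how} the compatibility is established. You suggest an induction on the size of the pattern, matching column-by-column reading words. The paper does not argue this way. It first rewrites $\lie w(\pi\ostar\pi')=(\widehat\tau^{-1}*\widetilde\varphi w_0)\cdot w_0$ in terms of the maximal representative $\widehat\tau$ of the final direction of $\pi$ and the minimal representative $\widetilde\varphi$ of the initial direction of $\pi'$ (Proposition~\ref{lem:wasso}), and then identifies these two Weyl group elements directly on the GT side: $*(\lie i(Q))w_0=\widetilde\varphi$ and $(*(\lie f(P)))^{-1}=\widehat\tau$ (Lemmas~\ref{lem:init} and~\ref{lem:final}). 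The proofs of those two lemmas are the substantive combinatorics, and they go through \emph{Kogan faces} and Fujita's identification of (dual) Kogan faces with (opposite) Demazure crystals (Propositions~\ref{prop:fujita} and~\ref{prop:fujita1}), together with Propositions~\ref{prop:initdem} and~\ref{prop:finaldem}. This structural input is entirely absent from your sketch; without it, your step two is a promissory note rather than a proof.

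Your step three contains an actual error. You claim that applying $f_i$ to $\pi\ostar\pi'$ ``either preserves or multiplies its Demazure product on the right by $s_i$ in the $*$-sense, and the resulting element is $\leq$ the original one in Bruhat order.'' But $w*s_i=\textbf{max}\{w,ws_i\}\geq w$, so this cannot produce a $\leq$ bound; and in any case $\lie w$ is defined via a double-coset minimum (equation~\eqref{eq:wasso}), not as the Demazure product of a reading word, so its behaviour under a single $f_i$ is not of the simple form you describe. The paper sidesteps this entirely: once $\lie p(P,Q)=\lie w(\phi(P)\ostar\phi(Q))$ is known and $\phi\times\phi$ is a crystal isomorphism, the $e_i,f_i$-stability of $\GTz(\lambda,w,\mu)$ is inherited verbatim from the $e_i,f_i$-stability of $P(\lambda,w,\mu)$ already established in \cite{KRV}. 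Replace your direct argument with this transport.
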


We will prove theorem \ref{thm:gt-mod} at the end of the section \ref{sec:proof-main-thm}.

\begin{figure}
\begin{center}
\begin{tikzpicture}[x={(1cm*0.5,-\rootthree cm*0.5)},y={(1cm*0.5,\rootthree cm*0.5)}]
\foreach\i in{0,...,4}
\foreach\j in{\i,...,4}{
  \pgfmathtruncatemacro{\k}{5 - \j + \i};
  \pgfmathtruncatemacro{\l}{\i + 1};
  \draw(\i,\j)node(a\i\j){$a_{\k\l}$};
}
\foreach\i/\ii in{0/-1,1/0,2/1,3/2,4/3}
 \foreach\j/\jj in{0/-1,1/0,2/1,3/2,4/3}{
 % \ifnum\i<\j \draw[color=red](a\i\jj)--(a\i\j); 
 %    \fi;
   \ifnum\i>\j\else\ifnum\i>0 \draw[color=blue](a\ii\j)--(a\i\j);\fi\fi
 }
 \node at (3.7,4.3) {$s_4$};  \node at (2.7,4.3) {$s_3$}; \node at (1.7,4.3) {$s_2$}; \node at (.7,4.3) {$s_1$};
 
 \node at (2.7,3.3) {$s_3$};  \node at (1.7,3.3) {$s_2$}; \node at (.7,3.3) {$s_1$}; 
 
 \node at (1.7,2.3) {$s_2$}; \node at (.7,2.3) {$s_1$};
 
 \node at (.7,1.3) {$s_1$};
\end{tikzpicture}\qquad \qquad
\begin{tikzpicture}[x={(1cm*0.5,-\rootthree cm*0.5)},y={(1cm*0.5,\rootthree cm*0.5)}]
\foreach\i in{0,...,4}
\foreach\j in{\i,...,4}{
  \pgfmathtruncatemacro{\k}{5 - \j + \i};
  \pgfmathtruncatemacro{\l}{\i + 1};
  \draw(\i,\j)node(a\i\j){$a_{\k\l}$};
  %\draw(\i,\j)node(a\i\j + (.3,.3)){$s_{\i - \j}$};
}
  \foreach\i/\ii in{0/-1,1/0,2/1,3/2,4/3}
 \foreach\j/\jj in{0/-1,1/0,2/1,3/2,4/3}{
  \ifnum\i<\j \draw[color=red](a\i\jj)--(a\i\j); \fi
  %\ifnum\i>\j\else\ifnum\i>0 \draw[color=blue](a\ii\j)--(a\i\j);\fi\fi
 }
 \node at (-.3,.3) {$s_4$}; \node at (-.3,1.3) {$s_3$}; \node at (-.3,2.3) {$s_2$}; \node at (-.3,3.3) {$s_1$};
 
 \node at (.7,1.3) {$s_3$}; \node at (.7,2.3) {$s_2$}; \node at (.7,3.3) {$s_1$}; 
 
 \node at (1.7,2.3) {$s_2$}; \node at (1.7,3.3) {$s_1$}; 
 
 \node at (2.7,3.3) {$s_1$};
\end{tikzpicture}
\caption{The blue edges $a_{i-1,j} \longrightarrow a_{i,j+1}$ labelled by $s_{j}$ and the red edges $a_{ij} \longrightarrow a_{i-1,j}$ are labelled by $s_{i-j}$. }
\label{fig:gt-aij-specified}
\end{center}
\end{figure}
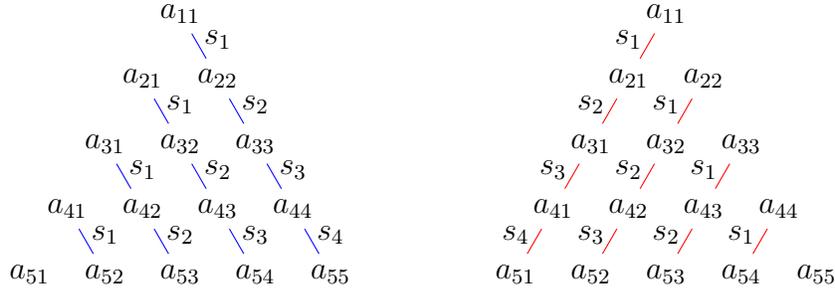

\subsection{Association of a permutation to a GT pair (P,Q)}\label{subsec:asso-perm}\ 

Let $(P,Q) \in \GT(\lambda) \times \GT(\mu)$ for given partitions $\lambda,\mu$ with $n$-parts. Let $P = (a_{ij})_{n\geq i \geq j \geq 1}$ be a GT-pattern of size $n$ as shown in Figure \ref{fig:gt-array-aij-specified}. Consider the labeling of South-East edges of $P$ (marked by blue) $a_{i-1,j} \to a_{i,j+1}$ by the simple reflection $s_{j}$ as shown in the left in Figure \ref{fig:gt-aij-specified} for $n \geq i>j\geq 1$. Let $Q = (a_{ij})_{n\geq i \geq j \geq 1}$ be a GT-pattern of size $n$ as shown in Figure \ref{fig:gt-array-aij-specified}. Consider the labeling of North-East edges of $Q$ (marked by red) $a_{ij} \to a_{i-1,j}$ by the simple reflection $s_{i-j}$ as shown in the right in Figure \ref{fig:gt-aij-specified} for $n \geq i>j\geq 1$.   

 Having done this, we give a procedure to associate a word $w(P,Q)$ in the following two steps. Simultaneously, we will do an example given in Figure \ref{fig:word-exp} for a better understanding of the procedure. Consider the left GT-pattern is $P$ and the right GT-pattern is $Q$ in Figure \ref{fig:word-exp}.
 \begin{procedure}\label{proced}
 \begin{enumerate}
     \medskip \item\label{prcdr1} First read only those labels in $P$ for which $a_{i-1,j} = a_{i,j+1}$ for $n \geq i > j\geq 1$, from bottom to top and left to right, write them from left to right and denote this word by $\lie f (P)$. 
     
     In the example of Figure \ref{fig:word-exp}, we read simple reflections along the blue path which moves from bottom to top and the resulting word is $\lie f (P)=s_1s_2s_4s_2s_1s_1$.
     \medskip \item\label{prcdr2} Next, read only those labels in $Q$ for which $a_{i,j} = a_{i-1,j}$ for $n \geq i>j\geq 1$, from top to bottom and left to right, write them %after the word in step (1) 
     from left to right and denote this word by $\lie i (Q)$. 
     
     In the example of Figure \ref{fig:word-exp}, we read simple reflections along the red path which moves from top to bottom and the resulting word is $\lie i (Q)=s_1s_3s_2s_4s_2$.
 \end{enumerate}
 \end{procedure}
The associated word to the pair $(P,Q)$ is $w(P,Q) := \lie f (P) \lie i (Q)$. In our example of Figure \ref{fig:word-exp} we have $w(P,Q) = s_1s_2s_4s_2s_1s_1 \ s_1s_3s_2s_4s_2$.

The permutation associated to the pair $(P,Q)$ is: 
\begin{equation}\label{eq:per-asso-GT}
    \lie{p}(P,Q) := *(w(P,Q)) \cdot w_0
\end{equation}
where $*(\cdot)$ is the Demazure product (see \ref{def:dem-prod}) and $w_0$ is the longest length element in $S_n$. In our example $*(w(P,Q)) = s_1s_4s_2s_1s_3s_4s_2$ and $\lie{p}(P,Q) = s_2s_1s_3$.

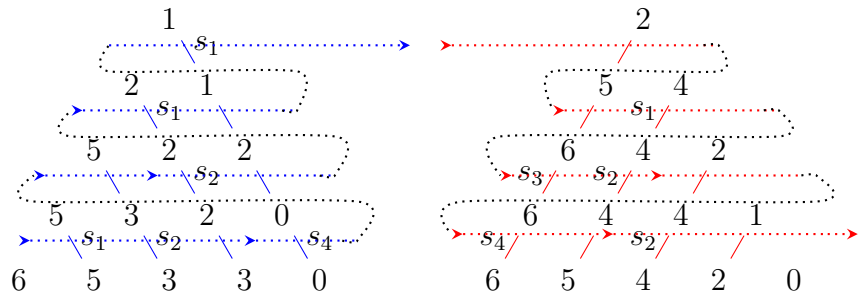
\begin{figure}[h]
\begin{center}
\begin{tikzpicture}[x={(1cm*0.5,-\rootthree cm*0.5)},y={(1cm*0.5,\rootthree cm*0.5)}]
  \draw(0,0)node(a00){$6$};\draw(0,1)node(a01){$5$};\draw(0,2)node(a02){$5$};\draw(0,3)node(a03){$2$};\draw(0,4)node(a04){$1$};
  \draw(1,1)node(a11){$5$};\draw(1,2)node(a12){$3$};\draw(1,3)node(a13){$2$};\draw(1,4)node(a14){$1$};
  \draw(2,2)node(a22){$3$};\draw(2,3)node(a23){$2$};\draw(2,4)node(a24){$2$};
  \draw(3,3)node(a33){$3$};\draw(3,4)node(a34){$0$};
  \draw(4,4)node(a44){$0$};
  
  \foreach\i/\ii in{0/-1,1/0,2/1,3/2,4/3}
 \foreach\j/\jj in{0/-1,1/0,2/1,3/2,4/3}{
  %\ifnum\i<\j \draw[color=red](a\i\jj)--(a\i\j); \fi
  \ifnum\i>\j\else\ifnum\i>0 \draw[color=blue](a\ii\j)--(a\i\j);\fi\fi
 }
 \node at (3.7,4.3) {$s_4$}; % \node at (2.7,4.3) {$s_3$}; \node at (1.7,4.3) {$s_2$};
 \node at (.7,4.3) {$s_1$};
 
% \node at (2.7,3.3) {$s_3$}; 
\node at (1.7,3.3) {$s_2$}; \node at (.7,3.3) {$s_1$}; 
 
 \node at (1.7,2.3) {$s_2$};% \node at (.7,2.3) {$s_1$};
 
 \node at (.7,1.3) {$s_1$};
 
 \draw[thick, blue, stealth reversed - , style = dotted] (-.3,.3)--(2.7,3.3);
 \draw[thick, blue, stealth reversed - , style = dotted] (2.7,3.3)--(4.2,4.8);
 \draw[thick, style = dotted] plot [smooth, tension=0.4] coordinates {(4,4.6) (4.2,4.8) (3.9,5.1) (-.4,.8) (-.6,1)};
 \draw[thick, blue, stealth reversed - , style = dotted] (-.6,1)--(.9,2.5);
 \draw[thick, blue, stealth reversed - , style = dotted] (.9,2.5)--(3.4,5);
 \draw[thick, style = dotted] plot [smooth, tension=0.4] coordinates {(3.2,4.8) (3.4,5) (3.1,5.3) (-.4,1.8) (-.6,2)};
 \draw[thick, blue, stealth reversed - , style = dotted] (-.6,2)--(2.4,5);
 \draw[thick, style = dotted] plot [smooth, tension=0.4] coordinates {(2.2,4.8) (2.4,5) (2.1,5.3) (-.4,2.8) (-.6,3)};
 \draw[thick, blue, stealth reversed - , -stealth, style = dotted] (-.6,3)--(3.4,7);
\end{tikzpicture}
\begin{tikzpicture}[x={(1cm*0.5,-\rootthree cm*0.5)},y={(1cm*0.5,\rootthree cm*0.5)}]
  \draw(0,0)node(a00){$6$};\draw(0,1)node(a01){$6$};\draw(0,2)node(a02){$6$};\draw(0,3)node(a03){$5$};\draw(0,4)node(a04){$2$};
  \draw(1,1)node(a11){$5$};\draw(1,2)node(a12){$4$};\draw(1,3)node(a13){$4$};\draw(1,4)node(a14){$4$};
  \draw(2,2)node(a22){$4$};\draw(2,3)node(a23){$4$};\draw(2,4)node(a24){$2$};
  \draw(3,3)node(a33){$2$};\draw(3,4)node(a34){$1$};
  \draw(4,4)node(a44){$0$};
  
  \foreach\i/\ii in{0/-1,1/0,2/1,3/2,4/3}
 \foreach\j/\jj in{0/-1,1/0,2/1,3/2,4/3}{
  \ifnum\i<\j \draw[color=red](a\i\jj)--(a\i\j); \fi
 % \ifnum\i>\j\else\ifnum\i>0 \draw[color=blue](a\ii\j)--(a\i\j);\fi\fi
 }
  
  \node at (-.3,.3) {$s_4$}; \node at (-.3,1.3) {$s_3$}; %\node at (-.3,2.3) {$s_2$}; \node at (-.3,3.3) {$s_1$};
 
% \node at (.7,1.3) {$s_3$};
\node at (.7,2.3) {$s_2$}; \node at (.7,3.3) {$s_1$}; 
 
 \node at (1.7,2.3) {$s_2$};% \node at (1.7,3.3) {$s_1$}; 
 
 %\draw[very thick, style = dotted] plot [smooth, tension=0.4] coordinates {(2.2,4.8) (2.4,5) (2.1,5.3) (-.4,2.8) (-.6,3)};
 \draw[thick, red, stealth reversed - , style = dotted] (-2.5,1.1)--(1.2,4.8);
 \draw[thick, style = dotted] plot [smooth, tension=0.4] coordinates {(1,4.6) (1.2,4.8) (1.4,4.6) (-.8,2.4) (-.5,2.1)};
 \draw[thick, red, stealth reversed - , style = dotted] (-.5,2.1)--(2.5,5.1);
 \draw[ thick, style = dotted] plot [smooth, tension=0.4] coordinates {(2.3,4.9) (2.5,5.1) (2.7,4.9) (-1,1.2) (-.7,.9)};
 \draw[ thick, red, stealth reversed - , style = dotted] (-.7,.9)--(1.3,2.9);
 \draw[thick, red, stealth reversed - , style = dotted] (1.3,2.9)--(3.3,4.9);
 \draw[thick, style = dotted] plot [smooth, tension=0.4] coordinates {(3.3,4.9) (3.5,5.1) (3.7,4.9) (-1,.2) (-.9,-.2)};
 \draw[thick, red, stealth reversed - , style = dotted] (-.9,-.2)--(1.1,1.8);
 \draw[ thick, red, stealth reversed - , style = dotted] (1.1,1.8)--(4.1,4.8);
 \draw[thick, red, -stealth, style = dotted] (4.1,4.8)--(4.5,5.2);
 
% \node at (2.7,3.3) {$s_1$};

\end{tikzpicture}
\end{center}
\caption{Associated word is $w(P,Q) = s_1s_2s_4s_2s_1s_1 \ s_1s_3s_2s_4s_2$.}
\label{fig:word-exp}
\end{figure}

The following is an immediate Corollary of theorem \ref{thm:gt-mod}.
\begin{corollary}
For given $\lambda,\mu \in P^+$, let $\GTz(\lambda+\mu)$ be the collection of all pairs $(P,Q)$ in $\GTz(\lambda) \times \GTz(\mu)$ such that the word $w(P,Q)$ contains $w_0$ as a subword then, \[ \textbf{char } V(\lambda+\mu) = \sum_{(P,Q) \in \GTz(\lambda+\mu)} \exp^{wt(P)+wt(Q)}. \]
\end{corollary}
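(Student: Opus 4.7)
The plan is to deduce the corollary from Theorem \ref{thm:gt-mod} by specializing it to $w = 1$ and using the identification $K(\lambda, 1, \mu) \cong V(\lambda + \mu)$ recalled in item (1) of the list of basic KK-properties in Section 1. Once that reduction is in place, all that remains is the combinatorial identification of the indexing set $\GTz(\lambda + \mu)$ defined in the corollary with $\GTz(\lambda, 1, \mu)$ from the main theorem.

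First I would apply Theorem \ref{thm:gt-mod} with $w = 1$ to obtain
\[ \textbf{char } V(\lambda + \mu) = \textbf{char } K(\lambda, 1, \mu) = \sum_{(P, Q) \in \GTz(\lambda, 1, \mu)} \exp^{wt(P) + wt(Q)}. \]
By the definition of $\GTz(\lambda, 1, \mu)$, a pair $(P,Q)$ lies in this set iff $\lie{p}(P,Q) \leq 1$ in Bruhat order; since $1$ is the minimum element of Bruhat order, this is equivalent to $\lie{p}(P,Q) = 1$. Unpacking the definition $\lie{p}(P,Q) = *(w(P,Q)) \cdot w_0$ and using that $w_0$ is an involution, the condition becomes $*(w(P,Q)) = w_0$.

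The core combinatorial step is therefore to show that $*(w(P,Q)) = w_0$ if and only if $w(P,Q)$ contains a reduced expression for $w_0$ as a subword. The ``if'' direction is immediate from Lemma \ref{lem:maxdeo-str}: if $u$ is a subword of $w(P,Q)$ that is a reduced expression for $w_0$, then $*(u) = w_0$, so the lemma gives $*(w(P,Q)) \geq w_0$, and since $w_0$ is the top element of Bruhat order, equality holds. For the ``only if'' direction, I would use the standard greedy computation of the Demazure product: writing $w(P,Q) = s_{i_1} s_{i_2} \cdots s_{i_k}$ and inductively setting $u_0 = 1$, $u_j = u_{j-1} * s_{i_j}$, we have $u_j = u_{j-1} s_{i_j}$ exactly when the length increases (and $u_j = u_{j-1}$ otherwise); the indices $j$ where the length strictly increases pick out a subword that is a reduced expression for $u_k = *(w(P,Q)) = w_0$.

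There is no serious obstacle here; the argument is essentially a dictionary between the Bruhat inequality $\lie{p}(P,Q) \leq 1$ and the subword condition, using Lemma \ref{lem:maxdeo-str} in one direction and the greedy-subword property of the Demazure product in the other. If anything, the main subtlety is just being careful that ``contains $w_0$ as a subword'' is interpreted (as is standard in Coxeter theory) as ``contains a reduced expression for $w_0$ as a subsequence of simple reflections'', which is exactly the form produced by the greedy algorithm above.
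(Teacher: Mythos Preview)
Your argument is correct and is exactly the intended one: the paper itself does not give a proof, simply calling this ``an immediate Corollary of theorem \ref{thm:gt-mod}'', and your specialization to $w=1$ together with the equivalence $*(w(P,Q))=w_0 \iff w(P,Q)$ contains a reduced word for $w_0$ as a subword is precisely the content that makes it immediate. Your use of Lemma \ref{lem:maxdeo-str} for one direction and the greedy computation of the Demazure product for the other is the standard justification, so there is nothing to add.
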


%Now we will move towards a proof of theorem \ref{thm:gt-mod} in the coming sections.  

\section{Lakshmibai-Seshadri path model for the KK-modules}\label{sec:lspath-mod}

\subsection{} We recall the Lakshmibai-Seshadri (LS) path model from \cite{KRV}. LS path introduced by Littelmann \cite{litt:inv}, it is a piecewise linear path $\pi:[0,1] \to P_{\mathbb{R}}$ such that $\pi(0)=0$ and $\pi(1) \in P$, where $P_{\mathbb{R}}$ is the real vector space $P \otimes_{\mathbb{Z}} \mathbb{R}$.  Let $\lambda \in P^+$, an LS path $\pi$ of shape $\lambda$ is given by a sequence \[ \pi = (w_1 > w_2 > \cdots >w_r;0 < a_1 < a_2 < \cdots < a_r=1)\] which satisfies ``\textit{chain conditions}", where $w_i \in W/W_\lambda$ and $W_\lambda$ is the stabilizer of $\lambda$. %and $a_i \in [0,1]$ are rational numbers for $1 \leq i \leq k$.
The cosets $w_1$ and $w_r$ are called the initial direction and final direction of $\pi$ respectively. Let $P_\lambda$ denote the set of all LS paths of shape $\lambda$.

Let $\pi \ostar \pi'$ denote the concatenation of two LS paths $\pi$ and $\pi'$ and \[ P_\lambda \ostar P_\mu = \{ \pi \ostar \pi' \ | \ \pi \in P_\lambda, \ \pi' \in P_\mu \}.\] Note that we can identify $P_\lambda \ostar P_\mu$ with $P_\lambda \times P_\mu$ as a set by assuming $\pi \ostar \pi' = (\pi,\pi')$. We associate a Weyl group element $\lie{w}(\pi \ostar \pi')$ to each element $\pi \ostar \pi' \in P_\lambda \ostar P_\mu$ as follows (see \cite{KRV}, section 3.1):
\begin{equation}\label{eq:wasso}
    \lie{w}(\pi \ostar \pi'):= \textbf{min}\{W_\lambda \ I(\tau^{-1}) \ \varphi \ W_\mu\}
\end{equation}
where $\tau$ is any representative of the final direction of $\pi$ and $\varphi$ is any representative of the initial direction of $\pi'$.

\begin{definition}[KK set \cite{KRV}]\label{def:kkset}
For given $\lambda,\mu \in P^+$ and $w \in W$, the KK set defined as follows:
\begin{equation}
    P(\lambda,w,\mu) = \{ \pi \ostar \pi' \in P_\lambda \ostar P_\mu \ | \ \lie{w}(\pi \ostar \pi') \leq w \}.
\end{equation}
\end{definition}

The following theorem shows that the KK set $P(\lambda,w,\mu)$ is a path model for the KK-module $K(\lambda,w,\mu)$.

\begin{theorem}[\cite{KRV}]\label{thm:lspath-mod}
Let $\lie g$ be a symmetrizable Kac-Moody Lie algebra which is either symmetric or of finite type, then the KK set $P(\lambda,w,\mu)$ is a path model for the KK-module $K(\lambda,w,\mu)$ in the sense that
\begin{equation}
    \textbf{char} \  K(\lambda,w,\mu) = \sum_{\pi \ostar \pi' \in P(\lambda,w,\mu)} \exp^{\pi(1)+\pi'(1)},
\end{equation}
and $K(\lambda,w,\mu)$ is crystal operators $e_i,f_i$ invariant.
\end{theorem}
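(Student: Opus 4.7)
The plan is to prove both claims simultaneously by identifying $P(\lambda,w,\mu)$ as the sub-crystal of $P_\lambda \ostar P_\mu$ generated by a distinguished path corresponding to the cyclic generator $v_\lambda \otimes v_{w\mu}$, and then invoking Littelmann's theorem that a sub-crystal of the tensor product path crystal corresponds to a $U\lie{g}$-submodule whose character is the weight-generating function of the sub-crystal.

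First I would exhibit the generator. Let $\pi_\lambda$ be the straight-line path $t\mapsto t\lambda$ (whose final direction is the trivial coset in $W/W_\lambda$) and let $\pi_{w,\mu} = (w;\,0,1)$ be the trivial LS path of shape $\mu$ with initial and final direction $w$. Set $\pi_0 := \pi_\lambda \ostar \pi_{w,\mu}$. A direct calculation from \eqref{eq:wasso} yields $\lie{w}(\pi_0) = \mathbf{min}(W_\lambda\, w\, W_\mu) \leq w$, so $\pi_0 \in P(\lambda,w,\mu)$; moreover $\pi_0(1) = \lambda + w\mu$, matching the weight of the cyclic generator of $K(\lambda,w,\mu)$.

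The main step is to show that $P(\lambda,w,\mu)$ is closed under the root operators $e_i, f_i$ on $P_\lambda \ostar P_\mu$. Using Littelmann's explicit formulas for these operators on a concatenation $\pi \ostar \pi'$, the operator either acts on $\pi$ alone (modifying its final direction $\tau$ to $s_i \tau$, or leaving it fixed) or on $\pi'$ alone (modifying its initial direction $\varphi$ to $s_i \varphi$, or leaving it fixed), subject to the sign condition determined by the $i$-string structure. The required statement is that the $\mathbf{min}$ in \eqref{eq:wasso} cannot jump strictly past $w$ in Bruhat order under any such perturbation; this reduces to a combinatorial lemma about minima of double cosets intersected with Bruhat intervals, which follows from the compatibility of the Deodhar product $*$ with Bruhat order together with the chain conditions on LS paths.

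With closure established, Littelmann's tensor-product theorem produces a $U\lie{g}$-submodule $M \subseteq V(\lambda) \otimes V(\mu)$ whose character equals the right-hand side of the claimed identity. Since $\pi_0 \in P(\lambda,w,\mu)$ corresponds to a nonzero scalar multiple of $v_\lambda \otimes v_{w\mu}$, we get $K(\lambda,w,\mu) \subseteq M$. For the reverse inclusion I would show that every $\pi \ostar \pi' \in P(\lambda,w,\mu)$ is reachable from $\pi_0$ by applying $e_i, f_i$: raise to a highest-weight representative by repeated $e_i$'s, observe that such a representative must be of the form $\pi_\lambda \ostar \pi_{v,\mu}$ for some $v$ satisfying $\mathbf{min}(W_\lambda v W_\mu) \leq w$ (hence $v \leq w$ after choosing representatives), and then connect $\pi_{v,\mu}$ back to $\pi_{w,\mu}$ by sequences of $f_i$'s preserving the Bruhat condition. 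The hard part will be the closure step: one must carefully control how $\lie{w}(\pi \ostar \pi')$ varies under a root operator, and the symmetric or finite-type hypothesis enters precisely to guarantee the combinatorial identities governing minima of double cosets against Bruhat intervals.
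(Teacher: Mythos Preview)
The paper does not prove this theorem at all: it is stated with the attribution \cite{KRV} and used as a black box to deduce Theorem~\ref{thm:gt-mod}. There is therefore nothing in the present paper to compare your proposal against; the actual argument lives in the cited reference.

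That said, your sketch has a structural error worth flagging. You propose to realize $P(\lambda,w,\mu)$ as the sub-crystal generated from the single path $\pi_0 = \pi_\lambda \ostar \pi_{w,\mu}$ by root operators. But $K(\lambda,w,\mu)$ is in general a direct sum of several irreducibles (already $K(\lambda,w_0,\mu)=V(\lambda)\otimes V(\mu)$ has many summands), so $P(\lambda,w,\mu)$ is a union of several connected crystal components, not one. Crystal operators cannot move between components, so no single generator suffices; your ``reverse inclusion'' step, where you raise an arbitrary $\pi\ostar\pi'$ to a highest-weight element and then claim it is of the form $\pi_\lambda\ostar\pi_{v,\mu}$ connectable to $\pi_0$, breaks down. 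Highest-weight elements of $P_\lambda\ostar P_\mu$ are of the form $\pi_\lambda\ostar\pi'$ with $\pi'$ a $\lambda$-dominant LS path, not a straight-line path $\pi_{v,\mu}$, and distinct such $\pi'$ index distinct components.

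The closure of $P(\lambda,w,\mu)$ under $e_i,f_i$ is indeed the heart of the matter, and your instinct to reduce it to how $\lie{w}(\pi\ostar\pi')$ changes under root operators is correct. But the character identity then has to be established by matching the \emph{set} of highest-weight paths in $P(\lambda,w,\mu)$ with the decomposition of $K(\lambda,w,\mu)$ into irreducibles (Kumar's refined PRV multiplicities), not by a cyclic-generation argument at the crystal level.
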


%We will identify the KK set $P(\lambda,w,\mu)$ in the GT-patterns $\GTz(\lambda) \times \GTz(\mu)$. 
\subsection{} We will simplify the definition of the Weyl group element $\lie{w}(\pi \ostar \pi')$. First, we recall some corollaries and lemmas from \cite{KRV} which are needed to prove this simplification. % For that we need following lemma which simplifies that definition of the Weyl group element $\lie{w}(\pi \ostar \pi')$.

%To prove lemma \ref{lem:wasso}, we recall some corollaries and lemmas from \cite{KRV}.

\begin{lemma}[\cite{KRV}, Corollary 2.5]\label{lem:uqmin}
Let $K$ be a subset of Weyl group $W$ that admits a unique minimal element say $u$. Then, for any two elements $\sigma_1$ and $\sigma_2$ in $W$, the set $I(\sigma_1)K I(\sigma_2)$ has a unique minimal element and this element is the unique minimal element in $I(\sigma_1)u I(\sigma_2)$.
\end{lemma}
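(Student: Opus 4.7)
The proof naturally splits into two inequalities. Since $u$ is the unique minimum of $K$, we have $u \in K$, so $I(\sigma_1)u I(\sigma_2) \subseteq I(\sigma_1)K I(\sigma_2)$. Once existence and uniqueness of a minimum are established for both sides, this inclusion forces $\min\bigl(I(\sigma_1)K I(\sigma_2)\bigr) \leq \min\bigl(I(\sigma_1)u I(\sigma_2)\bigr)$. All the real work lies in the opposite direction, which amounts to showing that $\min\bigl(I(\sigma_1)u I(\sigma_2)\bigr)$ is a lower bound for every element of $I(\sigma_1)K I(\sigma_2)$.

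The first technical step in the plan is to establish the auxiliary fact that for any single $v \in W$, the set $I(\sigma_1)v I(\sigma_2)$ admits a unique minimum. The natural attack is induction on $\ell(\sigma_1) + \ell(\sigma_2)$. The base case $\sigma_1 = \sigma_2 = e$ is immediate. For the inductive step, pick a simple reflection $s$ with $s\sigma_1 < \sigma_1$, set $\sigma_1' = s\sigma_1$, and use the decomposition $I(\sigma_1) = I(\sigma_1') \cup sI(\sigma_1')$. The Bruhat lifting property then lets one compare $\min\bigl(I(\sigma_1')v I(\sigma_2)\bigr)$ with $\min\bigl(sI(\sigma_1')v I(\sigma_2)\bigr)$ and identify the overall minimum; the symmetric argument handles $\sigma_2$.

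With minima in hand, the main claim reduces to a monotonicity statement: if $u \leq w$, then $\min\bigl(I(\sigma_1)u I(\sigma_2)\bigr) \leq z$ for every $z \in I(\sigma_1)w I(\sigma_2)$. I would prove this by induction on the length of a saturated chain $u = w_0 \lessdot w_1 \lessdot \cdots \lessdot w_k = w$ in Bruhat order, reducing to a single cover $u \lessdot w$. In the single-cover case, given $z = xwy$ with $x \leq \sigma_1$ and $y \leq \sigma_2$, the goal is to exhibit some $z' \in I(\sigma_1)u I(\sigma_2)$ with $z' \leq z$, whence $\min\bigl(I(\sigma_1)u I(\sigma_2)\bigr) \leq z' \leq z$. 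Such a $z'$ should come from combining the lifting property with the subword characterization of Bruhat order applied to a concatenation of reduced expressions for $x$, $w$, $y$.

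The main obstacle I foresee is precisely this last step: the naive guess $z' = xuy \leq xwy$ fails, because Bruhat order is in general not preserved under left or right multiplication. The delicate point is to locate the correct $z'$, which will typically not have the form $xuy$ but will arise from a carefully chosen reduced subword of the concatenation of reduced expressions for $x$, $w$, $y$. Careful bookkeeping via the exchange condition, or running the saturated-chain induction in parallel with the induction on $\ell(\sigma_1) + \ell(\sigma_2)$, should make this step go through.
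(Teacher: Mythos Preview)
The paper does not prove this lemma at all: it is simply quoted from \cite{KRV}, Corollary~2.5, with no argument given. So there is no ``paper's own proof'' to compare against here.

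That said, your plan is a reasonable direct attack, and the obstacle you flag is the genuine one. What you have written is a sketch rather than a proof: the step ``find $z' \in I(\sigma_1)uI(\sigma_2)$ with $z' \le z = xwy$'' is exactly the heart of the matter, and you have not actually carried it out. The naive candidate $z' = xuy$ does fail, as you note, and the subword/exchange bookkeeping you allude to can be made to work but is fiddly.

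A cleaner route---and almost certainly the one taken in \cite{KRV}, given the lemmas that surround this one in the present paper---is to pass to maxima via right multiplication by $w_0$. For a single $v$ one has
\[
\min\bigl(I(\sigma_1)\,v\bigr) \;=\; \bigl(\max\{xvw_0 : x \le \sigma_1\}\bigr)\,w_0 \;=\; \bigl(\sigma_1 * (vw_0)\bigr)\,w_0,
\]
using Lemma~\ref{lem:maxmin} and Lemma~\ref{lem:maxstar}. Monotonicity of the Demazure product (a consequence of $I(w)I(w') = I(w*w')$, Lemma~\ref{lem:ww}) then gives immediately that $v \le v'$ implies $\min(I(\sigma_1)v) \le \min(I(\sigma_1)v')$. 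Applying this once on the left and once on the right yields the two-sided statement with no need for the cover-by-cover induction or the delicate subword search. Your approach would eventually reach the same conclusion, but the $w_0$ trick bypasses the obstacle you identified entirely.
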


\begin{lemma}[\cite{KRV}, Remark 2.4]\label{lem:maxstar}
Let $\sigma$ and $w$ be elements of $W$. The set $\{I(\sigma)w\}$ has a unique maximal element and $$ \textbf{max} \ \{I(\sigma)w \} = \sigma * w .$$
\end{lemma}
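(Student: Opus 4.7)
The plan is to leverage the fact $\sigma * w = \textbf{max}\ I(\sigma)I(w)$ (noted earlier in the excerpt, from \cite[\S2 Remark 2.9]{KRV}) together with the evident inclusion $I(\sigma) w \subseteq I(\sigma) I(w)$, which holds because $w \in I(w)$. Since $\sigma * w$ dominates every element of the larger set, it dominates every element of $I(\sigma) w$, so it suffices to exhibit $\sigma * w$ itself as a member of $I(\sigma) w$, i.e., to produce some $u \leq \sigma$ with $u w = \sigma * w$. Once this is done, $\sigma * w$ is both an upper bound for $I(\sigma) w$ and a member of it, and is therefore its (unique) maximum.

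For the membership claim I would induct on $\ell(\sigma)$. The base case $\sigma = e$ is trivial since $e * w = w = e \cdot w$. For the inductive step, factor $\sigma = s \sigma'$ with $s$ a simple reflection and $\ell(\sigma') = \ell(\sigma) - 1$. Concatenating a reduced expression for $\sigma$ with one for $w$ and applying Lemma \ref{lem:dem} twice (splitting first at the boundary between the $\sigma$- and $w$-blocks, then after the leading $s$), along with the observation that the Demazure product of a reduced expression is the element it represents, yields the recursive identity $\sigma * w = s * (\sigma' * w)$. By the inductive hypothesis applied to $\sigma'$, one has $\sigma' * w = u' w$ for some $u' \leq \sigma'$, so $\sigma * w = \textbf{max}\{u' w,\, s u' w\}$. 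If this maximum equals $u' w$, taking $u := u' \leq \sigma' \leq \sigma$ finishes the step.

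Otherwise $\sigma * w = s u' w$, and I set $u := s u'$; then $u w = \sigma * w$, and I only need to verify $s u' \leq s \sigma' = \sigma$. This is the one genuinely nontrivial point and follows from the standard lifting property of Bruhat order: from $u' \leq \sigma'$ and $\sigma' < s \sigma'$, one deduces $s u' \leq s \sigma'$, either via Deodhar's Z-lemma when $s u' > u'$, or directly from the chain $s u' < u' \leq \sigma' < s \sigma'$ when $s u' < u'$. I expect this lifting-property step to be the only subtle point of the argument; the rest is a careful chaining of definitions with Lemma \ref{lem:dem} and the cited characterization $\sigma * w = \textbf{max}\ I(\sigma) I(w)$.
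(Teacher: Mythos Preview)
Your argument is correct. The key observation that $I(\sigma)w \subseteq I(\sigma)I(w)$ immediately gives $\sigma * w$ as an upper bound via the cited identity $\sigma * w = \textbf{max}\ I(\sigma)I(w)$, and your induction on $\ell(\sigma)$ cleanly establishes membership $\sigma * w \in I(\sigma)w$. The only delicate step is indeed the lifting property at the end, and your case split handles it properly: when $su' < u'$ the chain $su' < u' \leq \sigma' < s\sigma'$ is immediate, and when $su' > u'$ the inequality $su' \leq s\sigma'$ follows from the standard lifting lemma applied to $u' < s\sigma'$ together with $s(s\sigma') = \sigma' < s\sigma'$.

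As for comparison with the paper: there is nothing to compare. The paper does not prove this lemma; it merely records it as \cite[Remark~2.4]{KRV} and moves on. Your write-up therefore supplies a self-contained justification where the paper relies on an external reference. The approach you take---reducing to associativity of $*$ (via Lemma~\ref{lem:dem}) and a single invocation of the Bruhat lifting property---is the natural one and is essentially how such statements are proved in the Coxeter-group literature.
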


\begin{lemma}\label{lem:maxmin}
Let $K$ be a subset of $W$ which admits a unique minimum, then $$\textbf{min} \ \{ u \ | \ u \in K \} = (\textbf{max} \ \{ u w_0 \ | \ u \in K \})\cdot w_0,$$
where $w_0$ is the longest element in $W$.
\end{lemma}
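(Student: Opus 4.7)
The plan is to exploit the standard fact that right multiplication by the longest element $w_0$ is an order-reversing involution on $W$ with respect to the Bruhat order. That is, for $u, v \in W$, we have $u \leq v$ if and only if $vw_0 \leq uw_0$, and $w_0^2 = 1$. Granting this, the lemma follows almost by inspection.

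First I would let $u_0$ denote the unique minimum of $K$, so $u_0 \leq u$ for every $u \in K$. Applying the order-reversing bijection $x \mapsto xw_0$, this inequality becomes $uw_0 \leq u_0 w_0$ for every $u \in K$. Since $u_0 w_0$ itself lies in the set $\{uw_0 \mid u \in K\}$, it is an upper bound that is attained, hence it is the (necessarily unique) maximum:
\begin{equation*}
\textbf{max}\,\{uw_0 \mid u \in K\} = u_0 w_0.
\end{equation*}
Right-multiplying both sides by $w_0$ and using $w_0^2 = 1$ yields
\begin{equation*}
(\textbf{max}\,\{uw_0 \mid u \in K\}) \cdot w_0 = u_0 = \textbf{min}\,\{u \mid u \in K\},
\end{equation*}
which is the desired identity.

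The only step that is not completely formal is the order-reversal property of $w_0$, but this is a classical fact about Bruhat order on finite Weyl groups (it follows, for instance, from the characterization via subwords of a reduced expression, together with the fact that $w_0 u$ has reduced expression obtained by complementing a reduced expression of $u$ inside a reduced expression of $w_0$). So there is no real obstacle; the lemma is essentially a translation statement under the Bruhat-reversing involution.
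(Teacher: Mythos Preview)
Your proof is correct and takes essentially the same approach as the paper: both rely on the standard fact that $u \leq v$ in Bruhat order if and only if $vw_0 \leq uw_0$, from which the lemma follows immediately. The paper's proof is just a one-line invocation of this order-reversing property, while you have spelled out the consequence in slightly more detail.
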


The above lemma follows since $u \leq v$ if and only if $vw_0 \leq uw_0$ in the Bruhat order.

\begin{lemma}[\cite{KRV}, Remark 2.9]\label{lem:ww}
Let $w$ and $w'$ be two element in $W$ then, $I(w)I(w') = I(w*w')$. Moreover, if $\ell(ww')=\ell(w)+\ell(w')$ then $I(w)I(w') = I(ww')$, where $\ell(\cdot)$ is the length function on $W$.
\end{lemma}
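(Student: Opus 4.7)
The plan is to establish both claims using the subword characterization of the Bruhat order together with properties of the Demazure product developed earlier in the excerpt (Lemmas \ref{lem:maxdeo-str}, \ref{lem:maxstar}, \ref{lem:dem}). First, fix reduced expressions $w = s_{i_1}\cdots s_{i_p}$ and $w' = s_{j_1}\cdots s_{j_q}$, and let $\sigma$ denote the concatenated (a priori non-reduced) word $s_{i_1}\cdots s_{i_p}s_{j_1}\cdots s_{j_q}$. By Lemma \ref{lem:dem}, $*(\sigma) = w * w'$, so the first assertion reduces to showing $I(w)I(w') = I(*(\sigma))$.

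For the forward inclusion $I(w)I(w') \subseteq I(w*w')$, I would take $u \leq w$ and $v \leq w'$, choose a reduced subword $\alpha$ of $s_{i_1}\cdots s_{i_p}$ computing $u$ and a reduced subword $\beta$ of $s_{j_1}\cdots s_{j_q}$ computing $v$. Then $\alpha\beta$ is a subword of $\sigma$ whose product equals $uv$. Two standard facts -- (i) the product of any word is dominated in Bruhat order by its Demazure product (a quick induction on word length using the lifting property) and (ii) the Demazure product is monotone along inclusion of subwords (Lemma \ref{lem:maxdeo-str}) -- combine to give $uv \leq *(\alpha\beta) \leq *(\sigma) = w*w'$, so $uv \in I(w*w')$.

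For the reverse inclusion $I(w*w') \subseteq I(w)I(w')$, I would invoke the standard fact that $x \leq *(\sigma)$ if and only if $x$ admits a reduced expression that is a subword of $\sigma$ (a consequence of the strong exchange condition, valid for arbitrary, not necessarily reduced, words). Given such $x$, split its reduced subword of $\sigma$ at the boundary between the two blocks into pieces $\gamma_1$ and $\gamma_2$ with products $u'$ and $v'$. The inequalities $\ell(u') \leq |\gamma_1|$, $\ell(v') \leq |\gamma_2|$, together with $\ell(u'v') = \ell(x) = |\gamma_1| + |\gamma_2|$ and subadditivity $\ell(u'v') \leq \ell(u') + \ell(v')$, force $|\gamma_1| = \ell(u')$ and $|\gamma_2| = \ell(v')$; hence both pieces are individually reduced. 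This exhibits $u' \leq w$ and $v' \leq w'$ with $u'v' = x$, placing $x$ in $I(w)I(w')$. For the second assertion, the hypothesis $\ell(ww') = \ell(w) + \ell(w')$ means $\sigma$ is a reduced word for $ww'$; combined with $ww' \in I(w)I(w') \subseteq I(w*w')$ (by the forward inclusion just proved) and $\ell(w*w') \leq \ell(\sigma) = \ell(ww')$, equality of lengths forces $w*w' = ww'$, and substituting into the first part yields $I(w)I(w') = I(ww')$.

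The main technical point I expect to be the trickiest is the splitting step for the reverse inclusion: a reduced subword for $x$ in the possibly non-reduced word $\sigma$ must be shown to split into two individually reduced pieces across the block boundary. The length-counting argument dispatches this neatly, but it rests on the nontrivial characterization of $\{y : y \leq *(\sigma)\}$ as the set of elements admitting a reduced subword in $\sigma$ for an arbitrary word $\sigma$, which is really the crux of the lemma. Everything else is bookkeeping with previously established facts.
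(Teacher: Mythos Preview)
The paper does not supply its own proof of this lemma: it is stated with a citation to \cite[Remark~2.9]{KRV} and no argument follows. So there is nothing in the paper to compare your proposal against.

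That said, your argument is sound. The forward inclusion is clean, relying only on Lemmas~\ref{lem:maxdeo-str} and~\ref{lem:dem} together with the easy fact that the ordinary product of a word is dominated by its Demazure product. For the reverse inclusion, the length-counting argument showing that a reduced subword of $\sigma$ splits into two individually reduced pieces across the block boundary is correct and is indeed the heart of the matter. The one ingredient you invoke without proof---that $x \leq *(\sigma)$ forces $x$ to have a reduced expression occurring as a subword of the (possibly non-reduced) word $\sigma$---deserves a sentence of justification: since $*(\sigma)$ itself has a reduced expression as a subword of $\sigma$ (immediate induction on the length of $\sigma$), and since $x \leq *(\sigma)$, the ordinary subword property of Bruhat order applied to that reduced expression gives the claim. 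With that remark added, the proof is complete and self-contained relative to the earlier lemmas in the paper. Your derivation of the ``moreover'' clause is also fine, though one could shortcut it by simply observing that when $\ell(ww')=\ell(w)+\ell(w')$ the concatenated word $\sigma$ is already reduced, so $*(\sigma)=ww'$ directly.
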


The following proposition is a simplification of $\lie{w}(\pi \ostar \pi')$ defined in equation \eqref{eq:wasso}.

\begin{proposition}\label{lem:wasso}
For given an element $\pi \ostar \pi' \in P_\lambda \ostar P_\mu$, the Weyl group element associated in the equation \eqref{eq:wasso} is:
\begin{equation}
    \lie{w}(\pi \ostar \pi') = (\widehat{\tau}^{-1}*\widetilde{\varphi}w_0)\cdot w_0
\end{equation}
where $\widehat{\tau}$ is the maximal representative of the final direction of $\pi$ and $\widetilde{\varphi}$ is the minimal representative of the initial direction of $\pi'$.
\end{proposition}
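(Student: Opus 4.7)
The strategy is to simplify $\textbf{min}\{W_\lambda I(\tau^{-1})\,\varphi\,W_\mu\}$ in two stages: first absorb the factor $W_\lambda I(\tau^{-1})$ into a single Bruhat interval $I(\widehat{\tau}^{-1})$, then convert the minimum over the remaining set into a maximum via Lemma \ref{lem:maxmin}, so that the Demazure-product description of Lemmas \ref{lem:ww} and \ref{lem:maxstar} becomes directly applicable.

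For the first stage, let $w_0^\lambda$ denote the longest element of $W_\lambda$, so that $W_\lambda = I(w_0^\lambda)$. Lemma \ref{lem:ww} then gives $W_\lambda I(\tau^{-1}) = I(w_0^\lambda)\,I(\tau^{-1}) = I(w_0^\lambda * \tau^{-1})$, and Lemma \ref{lem:maxstar} identifies $w_0^\lambda * \tau^{-1}$ with $\textbf{max}\,W_\lambda \tau^{-1}$. Because length is invariant under inverses and $\widehat{\tau}$ is by definition the longest element of $\tau W_\lambda$, this maximum equals $\widehat{\tau}^{-1}$; hence $W_\lambda I(\tau^{-1}) = I(\widehat{\tau}^{-1})$. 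Using also $\varphi W_\mu = \widetilde{\varphi} W_\mu$, the claim reduces to showing
\[
\textbf{min}\, I(\widehat{\tau}^{-1})\,\widetilde{\varphi}\,W_\mu \ = \ (\widehat{\tau}^{-1} * \widetilde{\varphi} w_0)\,w_0.
\]

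For the second stage, Lemma \ref{lem:uqmin} guarantees a unique minimum, so Lemma \ref{lem:maxmin} rewrites the left side as $(\textbf{max}\,I(\widehat{\tau}^{-1})\,\widetilde{\varphi}\,W_\mu w_0)\,w_0$. The key observation would be that $\widetilde{\varphi}\,W_\mu w_0 \subseteq I(\widetilde{\varphi} w_0)$. I would verify this by a short length computation: since $\widetilde{\varphi}$ is the minimal representative, $\ell(\widetilde{\varphi} v) = \ell(\widetilde{\varphi}) + \ell(v)$ for $v \in W_\mu$, giving $\ell(\widetilde{\varphi} v w_0) = \ell(w_0) - \ell(\widetilde{\varphi}) - \ell(v)$, which is uniquely maximized at $v = 1$; the length maximum of a coset of the parabolic $w_0^{-1} W_\mu w_0$ is also its Bruhat maximum. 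This yields the sandwich
\[
I(\widehat{\tau}^{-1})\,\widetilde{\varphi} w_0 \ \subseteq \ I(\widehat{\tau}^{-1})\,\widetilde{\varphi}\,W_\mu w_0 \ \subseteq \ I(\widehat{\tau}^{-1})\,I(\widetilde{\varphi} w_0) \ = \ I(\widehat{\tau}^{-1} * \widetilde{\varphi} w_0),
\]
the last equality by Lemma \ref{lem:ww}. Lemma \ref{lem:maxstar} computes the maximum of the leftmost set as $\widehat{\tau}^{-1} * \widetilde{\varphi} w_0$, which is also the maximum of the rightmost set; by squeeze the middle set has the same maximum, and multiplying by $w_0$ on the right finishes the proof.

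The main obstacle is pinning down the sandwich claim $\widetilde{\varphi}\,W_\mu w_0 \subseteq I(\widetilde{\varphi} w_0)$; once that length bookkeeping and the (standard) upgrade from length maximum to Bruhat maximum on a parabolic coset are settled, everything else is a routine assembly of Lemmas \ref{lem:uqmin}, \ref{lem:maxstar}, \ref{lem:maxmin}, and \ref{lem:ww}.
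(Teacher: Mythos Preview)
Your proof is correct and shares the same architecture as the paper's: absorb $W_\lambda I(\tau^{-1})$ into $I(\widehat{\tau}^{-1})$ via Lemmas~\ref{lem:ww} and~\ref{lem:maxstar}, then pass from $\textbf{min}$ to $\textbf{max}$ via Lemma~\ref{lem:maxmin} and finish with Lemma~\ref{lem:maxstar}. The one substantive difference is how you dispose of $W_\mu$. The paper invokes Lemma~\ref{lem:uqmin} at the outset to replace $\varphi W_\mu$ by its unique minimum $\widetilde{\varphi}$, so that the remaining computation is simply $\textbf{min}\,I(\widehat{\tau}^{-1})\widetilde{\varphi} = (\textbf{max}\,I(\widehat{\tau}^{-1})\widetilde{\varphi}w_0)w_0 = (\widehat{\tau}^{-1}*\widetilde{\varphi}w_0)w_0$, with no further work. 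You instead carry $W_\mu$ through the $\textbf{min}\to\textbf{max}$ conversion and then eliminate it via the sandwich $\widetilde{\varphi}W_\mu w_0 \subseteq I(\widetilde{\varphi}w_0)$, which in turn rests on the fact that the longest element of a parabolic coset is its Bruhat maximum. That fact is standard and your length bookkeeping is right, so the argument goes through; but Lemma~\ref{lem:uqmin} is tailor-made to short-circuit exactly this detour, and using it as the paper does makes the second stage a two-line computation rather than a squeeze argument.
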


\begin{proof}
For given $\pi \ostar \pi' \in P_\lambda \ostar P_\mu$ the permutation $\lie{w}(\pi \ostar \pi')$ defined in equation \eqref{eq:wasso} is
\begin{equation}\label{eq:wasso-lem}
    \lie{w}(\pi \ostar \pi'):= \textbf{min}\{W_\lambda \ I(\tau^{-1}) \ \varphi \ W_\mu\}
\end{equation}
where $\tau$ is any representative of the final direction of $\pi$ and $\varphi$ is any representative of the initial direction of $\pi'$. Let $w_{0\lambda}$ be the longest element in the stabilizer $W_\lambda$. Then equation \eqref{eq:wasso-lem} can be written as follows:
\begin{equation}\label{eq:wasso-I}
    \lie{w}(\pi \ostar \pi'):= \textbf{min}\{I(w_{0\lambda}) \ I(\tau^{-1}) \ \varphi \ W_\mu\}.
\end{equation}
By lemma \ref{lem:uqmin} we can write equation \eqref{eq:wasso-I} as
\begin{align}
    \textbf{min}\{I(w_{0\lambda}) \ I(\tau^{-1}) \ \varphi \ W_\mu \} & =  \textbf{min}\{I(w_{0\lambda}) \ \textbf{min} \{ \ I(\tau^{-1}) \  \textbf{min} \{\varphi \ W_\mu \} \}\}\\
    & = \textbf{min}\{I(w_{0\lambda}) \ \textbf{min} \{ \ I(\tau^{-1}) \  \widetilde{\varphi} \}\}\\
    & = \textbf{min}\{I(w_{0\lambda})  \ I(\tau^{-1}) \  \widetilde{\varphi} \}\label{eq:11},
\end{align}

since $\textbf{min} \{\varphi \ W_\mu \} = \widetilde{\varphi}$. By lemma \ref{lem:ww}, equation \eqref{eq:11} can be written as follows,
\begin{equation}\label{eq:12}
  \textbf{min}\{I(w_{0\lambda})  \ I(\tau^{-1}) \  \widetilde{\varphi} \}  = \textbf{min}\{I(w_{0\lambda}*\tau^{-1}) \  \widetilde{\varphi} \}. 
\end{equation}

We can consider $\tau$ is the minimal representative of coset $\tau W_\lambda$ then $\widehat{\tau} = \tau w_{0\lambda}$ is the maximal coset representative of $\tau W_\lambda$. Then, equation \eqref{eq:12} is:
\begin{equation*}
    \textbf{min}\{I(w_{0\lambda}*\tau^{-1}) \  \widetilde{\varphi} \}   = \textbf{min}\{I(w_{0\lambda}\tau^{-1}) \  \widetilde{\varphi} \}
    =\textbf{min}\{I((\tau w_{0\lambda})^{-1}) \  \widetilde{\varphi} \}  =\textbf{min}\{I(\widehat{\tau}^{-1}) \  \widetilde{\varphi} \}.
\end{equation*}
By lemma \ref{lem:maxmin} $ \textbf{min}\{I(\widehat{\tau}^{-1}) \  \widetilde{\varphi} \} =(\textbf{max}\{I(\widehat{\tau}^{-1}) \  \widetilde{\varphi} w_0 \})\cdot w_0$, and by lemma \ref{lem:maxstar} we have $$\textbf{max}\{I(\widehat{\tau}^{-1}) \  \widetilde{\varphi} w_0 \} = \widehat{\tau}^{-1}*\widetilde{\varphi}w_0.$$
\end{proof}

\section{Demazure crystals and Kogan faces}\label{sec:rootop-crys}
%Given $\mu \in P^+$, there exist the crystal isomorphism say $\phi: \GTz(\mu) \to P_\mu$, between the set of integral GT patterns $\GTz(\mu)$ and the set of LS paths $P_\mu$. 

\subsection{} We will recall the definition of \textit{crystal} and \textit{crystal isomorphism} from \cite{kashiwara}.
Let $\lie g$ be a symmetrizable Kac-Moody Lie algebra, $\lie h$ be a Cartan subalgebra, and $\lie{h}^*$ be the dual subspace of $\lie h$. Denote by $\langle \cdot , \cdot \rangle : \lie h^* \times \lie{h} \to \mathbb{C}$ the canonical pairing and by $P$ the weight lattice of $\lie g$. Let $\{ \alpha_i \ | \ i \in I \} \subset \lie{h}^*$ be the set of simple roots and $\{ \alpha_i^\vee \ | \ i \in I \} \subset \lie{h}$ be the set of simple coroots, where $I = \{1,2,\ldots,n-1\}$ is an indexing of the vertices of the Dynkin diagram. 

\begin{definition}[Crystal \cite{kashiwara}]\label{def:crys}
A crystal is a set $\lie B$ equipped with the following maps:
\begin{itemize}
    \item $wt: \lie B \to P$.
    \item $\psi_i : \lie B \to \mathbb{Z} \cup \{ - \infty \}$ and $\varphi_i : \lie B \to \mathbb{Z} \cup \{ - \infty \}$ for $i \in I$.
    \item $e_i : \lie B \to \lie B \cup \{ 0 \}$ and $f_i : \lie B \to \lie B \cup \{ 0 \}$ for $i \in I$.
\end{itemize}
where $- \infty$ and $0$ are additional elements that are not contained in $\mathbb{Z}$ and $\lie B$, respectively, such that the following holds: for $i \in I$ and $b \in \lie B$,

\begin{enumerate}
    \item $\varphi_i(b) = \psi_i(b) + \langle wt(b) , \alpha_i^\vee \rangle$.
     \item $wt(e_i(b)) = wt(b) + \alpha_i$, $\psi_i(e_i(b)) = \psi_i(b) -1$ and $\varphi_i(e_i(b)) = \varphi_i(b)+1$, if $e_i(b) \in \lie B$.
     \item $wt(f_i(b)) = wt(b) - \alpha_i$, $\psi_i(f_i(b)) = \psi_i(b) + 1$ and $\varphi_i(f_i(b)) = \varphi_i(b)-1$, if $f_i(b) \in \lie B$.
    \item $e_i(b) = b'$ for $b' \in \lie B$, if and only if $f_i(b') = b$ for $b \in \lie B$.
     \item $e_i(b) = f_i (b) = 0$, if $\varphi_i(b) = - \infty$.
\end{enumerate}
\end{definition}

\begin{definition}[Crystal Isomorphism \cite{kashiwara}]\label{def:crysiso}
Let $\lie B_1$ and $\lie B_2$ be two crystals. A map $\phi:\lie B_1 \cup \{ 0 \} \to \lie B_2 \cup \{ 0 \} $ is called crystal isomorphism if $\phi$ is bijective and satisfies the followings:
\begin{enumerate}
     \item $\phi(0)=0$.
     \item  $wt(b) = wt(\phi(b))$, $\psi_i(b) = \psi_i(\phi(b))$ and $\varphi_i(b) = \varphi_i(\phi(b))$ for all $i \in I$ and $b \in \lie B_1$ such that $\phi(b) \in \lie B_2$.
     \item $\phi(e_i(b)) = e_i (\phi(b))$ and $\phi (f_i(b)) = f_i (\phi(b))$ for all $b \in \lie B_1$.
    \end{enumerate}
    where we set $e_i(\phi(b)) = f_i(\phi(0)) = 0$, if $\phi(b) = 0$.
\end{definition}

Littelmann has shown that the set of LS paths $P_\mu$ of shape $\mu$ for $\mu \in P^+$ is a crystal. For the definition of crystal operators $e_i$ and $f_i$ for $i \in I$ on LS paths we give references of Littelmann \cite{litt:inv} and \cite{LittelmannAnnals} which he calls the \textit{root operators}.

\subsection{Crystal operators on GT patterns}\label{ss:cryopongt} For a given integer partition $\mu = (\mu_1,\mu_2,\ldots,\mu_n)$ the set of integral GT-patterns $\GTz(\mu)$ of shape $\mu$ is also a crystal. We will recall the definition of crystal operators $e_i$ and $f_i$ for $i \in I=\{1,2,\ldots,n-1\}$ on $\GTz(\mu)$.% because we them in this paper.

Let $P$ be the following GT pattern of shape $\mu$.

\begin{center}
\begin{tikzpicture}[x={(1cm*0.5,-\rootthree cm*0.5)},y={(1cm*0.5,\rootthree cm*0.5)}]
  \draw(0,0)node(a00){$a_{n1}$};\draw(0,1)node(a01){$\cdot$};\draw(0,2)node(a02){$a_{31}$};\draw(0,3)node(a03){$a_{21}$};\draw(0,4)node(a04){$a_{11}$};
  \draw(1,1)node(a11){$a_{n2}$};\draw(1,2)node(a12){$\cdot$};\draw(1,3)node(a13){$a_{32}$};\draw(1,4)node(a14){$a_{22}$};
  \draw(2,2)node(a22){$\cdot$};\draw(2,3)node(a23){$\cdot$};\draw(2,4)node(a24){$a_{33}$};
  \draw(3,3)node(a33){$\cdot$};\draw(3,4)node(a34){$\cdot$};
  \draw(4,4)node(a44){$a_{nn}$};

\end{tikzpicture}
\end{center}

where $a_{n,i} = \mu_i$ for $i=1,2,\ldots,n.$

For a fixed $i \in I$ we will recall the definition of $e_i$ and $f_i$. Define the integers $d_t$ for $1 \leq t \leq i+1$ inductively as follows, set  $d_1 :=  a_{i,1} - a_{i+1,1}$ and \[ d_t := d_{t-1}+ a_{i,t-1}+a_{i,t} - a_{i-1,t-1} - a_{i+1,t} \ \text{for} \ 1 < t \leq i+1,\] where assume that $a_{k,k+1} = 0$ for $k \in \{ i-1,i \}$. Note that $d_1 \leq 0$ since $a_{i,1} \leq a_{i+1,1}$. % and $d_i \leq d_{i+1}$ because of the GT inequalities.
Let $d = \textbf{min}\{d_1,d_2,\ldots,d_{i+1}\}$, $m$ be the smallest integer such that $d_m = d$ and $M$ be the largest integer such that $d_M=d$.

The action of $e_i$ on $P$ defined as follows:
\begin{enumerate}
     \item if $d_m = 0$ then, $e_i(P)=0$.
    \item if $d_m < 0$ then, $e_i(P) = P'$ where $P'$ have all entries same as $P$ except the entry $a_{im}$ will change to $a_{im}+1$.
\end{enumerate}

The action of $f_i$ on $P$ defined as follows,
\begin{enumerate}
     \item if $M = i+1$ then, $f_i(P)=0$.
    \item if $M < i+1$ then, $f_i(P) = P'$ where $P'$ have all entries same as $P$ except the entry $a_{iM}$ will change to $a_{iM}-1$.
\end{enumerate}

The following proposition \ref{prop:crysiso} is a well-known fact and a variation of proof can be seen in \cite{LittelmannAnnals}.

\begin{proposition}\label{prop:crysiso}
There exists a crystal isomorphism $ \phi: \GTz(\mu) \to P_\mu$. 
\end{proposition}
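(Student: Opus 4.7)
The plan is to exhibit the crystal isomorphism by factoring it through the crystal of semistandard Young tableaux (SSYT) of shape $\mu$, which is a standard intermediary well-studied in both the tableau and path languages. Concretely, I would construct a bijection $\Psi_1:\GTz(\mu)\to\operatorname{SSYT}(\mu)$ and then compose with Littelmann's bijection $\Psi_2:\operatorname{SSYT}(\mu)\to P_\mu$; the isomorphism will be $\phi=\Psi_2\circ\Psi_1$.

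First I would recall the classical bijection $\Psi_1$: for $P=(a_{ij})\in\GTz(\mu)$, form the tableau $T$ whose $i$-th row contains $a_{k,i}-a_{k-1,i}$ copies of the entry $k$ (for $i\le k\le n$, with the convention $a_{i-1,i}=0$). The North-East/South-East inequalities of the GT pattern are exactly what is needed to make $T$ a semistandard Young tableau of shape $\mu$, and $\operatorname{wt}(T)$ coincides with $\operatorname{wt}(P)$ as given in the paper. Next, I would invoke Littelmann's bijection $\Psi_2$ identifying $P_\mu$ with $\operatorname{SSYT}(\mu)$ as crystals; this is the main content of \cite{litt:inv, LittelmannAnnals}, which also establishes that the root operators $e_i,f_i$ on LS paths correspond to the classical signature-rule operators on SSYT (equivalently, Kashiwara operators).

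The substantive verification is that $\Psi_1$ intertwines the crystal operators defined in \S\ref{ss:cryopongt} with the signature-rule operators on SSYT. For this I would fix $i\in I$ and compare the two recipes: the integers $d_t = (a_{i,1}-a_{i+1,1})+\sum_{s=2}^{t}(a_{i,s-1}+a_{i,s}-a_{i-1,s-1}-a_{i+1,s})$ can be rewritten, via the row-difference interpretation of $\Psi_1$, as partial sums counting unmatched $i$'s versus $(i{+}1)$'s in the $i/(i{+}1)$-subword of the reading word of $T$. Under this identification, the column index $m$ at which the minimum $d$ is first attained is exactly the column of the rightmost unmatched $i{+}1$, so incrementing $a_{im}$ corresponds to changing that $i{+}1$ into an $i$, i.e.\ to the action of $e_i$ on $T$; similarly the largest $M$ with $d_M=d$ corresponds to the leftmost unmatched $i$, whose change to $i{+}1$ implements $f_i$. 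The conditions $d_m=0$ and $M=i+1$ correspond precisely to there being no unmatched $i+1$ or $i$ respectively, giving the $0$-values.

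The main obstacle will be this last bookkeeping step — reindexing the $d_t$'s and matching them to the signature rule requires care with the boundary conventions ($a_{k,k+1}=0$), but it is purely combinatorial. Once $\Psi_1$ is shown to commute with $e_i,f_i$, the equalities $\operatorname{wt}(\Psi_1(P))=\operatorname{wt}(P)$, $\psi_i(\Psi_1(P))=\psi_i(P)$, and $\varphi_i(\Psi_1(P))=\varphi_i(P)$ follow automatically from the axioms of \ref{def:crys}. Composing with Littelmann's $\Psi_2$ yields the desired crystal isomorphism $\phi$. An alternative, shorter route would be to observe that both $\GTz(\mu)$ and $P_\mu$ are connected normal crystals of highest weight $\mu$ (with highest weight elements the constant GT pattern of shape $\mu$ and the straight-line path $\pi_\mu(t)=t\mu$, respectively) and appeal to the uniqueness of such crystals; I would mention this as a remark, but prefer the explicit construction since the explicit map is what is used implicitly elsewhere in the paper.
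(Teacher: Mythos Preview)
The paper does not actually prove this proposition: immediately before the statement it says that the result ``is a well-known fact and a variation of proof can be seen in \cite{LittelmannAnnals}'', and no argument is given. So there is nothing in the paper to compare your proposal against.

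Your approach is a standard and correct way to establish the isomorphism. Factoring through $\operatorname{SSYT}(\mu)$ via the classical GT-to-tableau bijection and then invoking Littelmann's identification of tableaux with LS paths is exactly the route implicit in the reference the paper cites. The bookkeeping you flag (matching the partial sums $d_t$ to the signature rule) is indeed the only nontrivial verification, and your description of it is accurate in outline; the boundary conventions need care but present no real obstacle. Your alternative one-line argument via uniqueness of connected normal crystals with a given highest weight is also perfectly valid and is in fact closer in spirit to how the paper treats the result --- as a black box --- since the paper never uses the explicit form of $\phi$, only its existence together with the separate Propositions~\ref{prop:initdem}, \ref{prop:finaldem}, \ref{prop:fujita}, and \ref{prop:fujita1} to pin down initial and final directions.
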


\subsection{} For an element $G \in \GTz(\mu)$, We define the initial and final direction of $G$ as the same as LS path $\phi(G)$. There exists a unique element $G_\mu^0$ in $\GTz(\mu)$ such that $e_i (G_\mu^0) =0 $ for all $i\in I$ and a unique element $G_\mu^*$ in $\GTz(\mu)$ such that $f_i(G_\mu^*) = 0$ for all $i \in I$. %The weight of a word $u$ in the alphabet $\{1, 2, \cdots, n\}$ is the tuple $(a_1, a_2, \cdots, a_n)$ where $a_i$ is the number of occurrences of $i$ in $u$. For a tableau $T \in \Tab(\mu)$ we define the weight of $T$ to be the weight of the corresponding word $b_T$. Then
The element $G_\mu^0$ is called the highest-weight element and $G_\mu^*$ is called the lowest-weight element of $\GTz(\mu)$.

\begin{definition}\label{def:demcry}
   
 Given $w \in S_n$, fix a reduced decomposition $w = s_{i_1} s_{i_2} \cdots s_{i_k}$. The set \[ \demcrys(\mu, w) :=\{f_{i_1}^{m_1} f_{i_2}^{m_2} \cdots f_{i_k}^{m_k}(G^0_\mu) \ | \ m_j \geq 0; \ 1 \leq j \leq k \} \subseteq \GTz(\mu) \] is called a {\em Demazure crystal}. 
 \end{definition}
 
 \begin{definition}\label{def:opdemcry}
   
 Given $w \in S_n$, fix a reduced decomposition $ww_0 = s_{j_1} s_{j_2} \cdots s_{i_t}$. The set \[ \demcrys(\mu, w)^{op} :=\{e_{j_1}^{m_1} e_{j_2}^{m_2} \cdots e_{j_t}^{m_t} G_\mu^* \ | \ m_s \geq 0; \ \ 1 \leq s \leq t \} \subseteq \GTz(\mu) \] is called an {\em opposite Demazure crystal}. 
 \end{definition}

 The following two propositions determine the minimal representative of the initial direction and the maximal representative of the final direction of a given $G \in \GTz(\mu)$ in terms of Demazure and opposite Demazure crystals respectively.

 \begin{proposition}\label{prop:initdem}
     Let $G \in \GTz(\mu)$ and $ S =  \{ w \in S_n \ | \ G \in \demcrys(\mu,w)  \}$. Then the set $S$ has a unique minimal element and $\textbf{min} \ S$ is the minimal representative of the initial direction of $G$.
 \end{proposition}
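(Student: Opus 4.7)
The plan is to transfer the question from GT patterns to LS paths via the crystal isomorphism $\phi: \GTz(\mu) \to P_\mu$ of Proposition~\ref{prop:crysiso}, and then apply the classical characterization of Demazure crystals in terms of initial directions.

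First, observe that since $\phi$ commutes with the crystal operators $e_i, f_i$ and sends the highest weight element $G_\mu^0$ to the highest weight LS path $\pi_\mu$, it induces a bijection $\demcrys(\mu, w) \leftrightarrow \phi(\demcrys(\mu, w))$ where the latter is the usual Demazure crystal in $P_\mu$ built from $\pi_\mu$ by applying the same sequence of $f_{i_j}^{m_j}$'s. Hence $G \in \demcrys(\mu, w)$ if and only if $\phi(G)$ lies in the LS-path Demazure crystal associated to $w$. In particular, the set $\demcrys(\mu, w)$ depends only on $w$ (not on the chosen reduced decomposition), by Littelmann's theorem.

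Next, I would invoke Littelmann's characterization of the Demazure crystal on LS paths: for $\pi \in P_\mu$ with initial direction the coset $\sigma W_\mu$, one has $\pi \in \demcrys(\mu, w)$ if and only if the minimal representative $\widetilde{\sigma}$ of $\sigma W_\mu$ satisfies $\widetilde{\sigma} \leq w$ in the Bruhat order. (This is the standard fact that Littelmann's Demazure subset is the union, over $v \leq w$ in $W/W_\mu$, of all LS paths whose initial direction has minimal representative $\leq v$.) Combining this with Step 1, if $\widetilde{\sigma}$ denotes the minimal representative of the initial direction of $G$ (defined via $\phi(G)$), then
\[
S = \{\, w \in S_n \mid \widetilde{\sigma} \leq w \,\},
\]
which is the principal upper Bruhat interval above $\widetilde{\sigma}$. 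This set manifestly has a unique minimal element, namely $\widetilde{\sigma}$ itself, proving both assertions simultaneously.

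The only nontrivial ingredient is the LS-path characterization quoted in the second paragraph; the rest is bookkeeping through $\phi$. I would either cite it directly from Littelmann~\cite{litt:inv, LittelmannAnnals} or, if a self-contained argument is preferred, sketch it by induction on $\ell(w)$: the base case $w=1$ gives $\demcrys(\mu,1)=\{G_\mu^0\}$ whose initial direction's minimal representative is $1$, and for the inductive step one uses that if $w = s_i w'$ with $\ell(w)=\ell(w')+1$, then $\demcrys(\mu,w) = \bigcup_{m\geq 0} f_i^m \demcrys(\mu,w')$, together with the fact that applying $f_i$ either preserves the minimal representative of the initial direction or replaces it by $s_i \cdot \widetilde{\sigma}$ when this is larger in Bruhat order. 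The main potential obstacle is making this inductive step rigorous at the level of GT patterns using the explicit description of $f_i$ from Section~\ref{ss:cryopongt}, but since the isomorphism $\phi$ is available, it is cleaner to do the induction on the LS-path side and pull back.
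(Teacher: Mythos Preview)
Your proposal is correct and follows essentially the same route as the paper: both invoke Littelmann's characterization of the Demazure crystal as the set of elements whose initial direction (equivalently, its minimal coset representative $\widetilde{\sigma}$) is $\leq w$ in Bruhat order, from which $S=\{w\mid \widetilde{\sigma}\leq w\}$ is a principal upper set with unique minimum $\widetilde{\sigma}$. The paper's proof is simply the terse version of your first two paragraphs, citing \cite[\S 5.2]{litt:inv} for the key characterization rather than offering the inductive sketch you include at the end.
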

 \begin{proof}
     From work of Littelmann \cite[\S 5.2]{litt:inv}, Demazure crystals can be defined in terms of initial direction as follows:
     \begin{equation}\label{eq:altdem}
         \demcrys(\mu, w) = \{ T \in \GTz(\mu) \ | \ \text{init}(T) \leq wS_\mu \}
     \end{equation}  where $\text{init}(T)$ denote the initial direction of $T$.
     Let $\widetilde{\varphi}$ be the minimal representative of the initial direction of $G$. From the above expression \eqref{eq:altdem} of Demazure crystal, clearly $\widetilde{\varphi} \in S$ and for any $w \in S$ we have $\text{init}(G) = \widetilde{\varphi} S_\mu \leq w S_\mu$ which implies that $\widetilde{\varphi} \leq w$.
 \end{proof}

 \begin{proposition}\label{prop:finaldem}
     Let $G \in \GTz(\mu)$ and $ \overline{S} =  \{ w \in S_n \ | \ G \in \demcrys(\mu,w)^{op}  \}$. Then the set $\overline{S}$ has a unique maximal element and $\textbf{max } \overline{S}$ is the maximal representative of the final direction of $G$.
 \end{proposition}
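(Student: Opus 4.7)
The plan is to mirror the argument of Proposition~\ref{prop:initdem}, replacing the initial direction and Demazure crystals by the final direction and opposite Demazure crystals.

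The first step is to establish the following analogue of \eqref{eq:altdem}:
\begin{equation*}
\demcrys(\mu,w)^{op} = \{ T \in \GTz(\mu) \mid \text{fin}(T) \geq w W_\mu \},
\end{equation*}
where $\text{fin}(T)$ denotes the final direction of $T$ (equivalently, of the LS path $\phi(T)$) and $\geq$ is the Bruhat order on the quotient $W/W_\mu$. This should follow from Littelmann's work combined with the standard $w_0$-duality between the Demazure module $V_w(\mu)$ and the opposite Demazure module $V^w(\mu)$: multiplication by $w_0$ interchanges highest and lowest weights, swaps initial and final directions, and reverses Bruhat order. This is precisely what is encoded by the appearance of $ww_0$ in Definition~\ref{def:opdemcry}.

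Granted this characterization, let $\widehat{\tau}$ be the maximal representative of the final direction of $G$, so that $\text{fin}(G) = \widehat{\tau} W_\mu$. Then $\widehat{\tau} \in \overline{S}$ trivially. For any $w \in \overline{S}$, the characterization above yields $w W_\mu \leq \widehat{\tau} W_\mu$ in the coset Bruhat order. By the standard compatibility of coset Bruhat order with maximum representatives, this is equivalent to $\widehat{w} \leq \widehat{\tau}$, where $\widehat{w}$ is the maximal representative of the coset $w W_\mu$. Since $w \leq \widehat{w}$ always holds in $W$, we obtain $w \leq \widehat{\tau}$, so $\widehat{\tau}$ is the unique maximum of $\overline{S}$.

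The main obstacle I anticipate is verifying the dual characterization of $\demcrys(\mu,w)^{op}$ in terms of the final direction. While this is morally a direct translation of Littelmann's result by the $w_0$-twist, a careful LS-path argument requires tracking how the final direction transforms under the raising operators $e_j$ applied iteratively to $G_\mu^*$ along a reduced word for $ww_0$, and checking that the generating prescription in Definition~\ref{def:opdemcry} exactly realizes the set cut out by the inequality on final directions. Once this is in hand, the remainder is routine book-keeping with Bruhat order on parabolic cosets.
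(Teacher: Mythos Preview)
Your proposal is correct and follows essentially the same route as the paper: the paper likewise asserts the characterization $\demcrys(\mu,w)^{op}=\{T\in\GTz(\mu)\mid \text{fin}(T)\geq wW_\mu\}$ and then deduces $w\leq\widehat{\tau}$ for every $w\in\overline{S}$. If anything, your version is slightly more careful, since you spell out the passage from the coset inequality $wW_\mu\leq\widehat{\tau}W_\mu$ to $w\leq\widehat{\tau}$ via the maximal representative $\widehat{w}$, whereas the paper states this implication without elaboration.
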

  \begin{proof}
     Similar to Demazure crystals, opposite Demazure crystals can be defined in terms of final direction as follows:
     \begin{equation}\label{eq:altopdem}
         \demcrys(\mu, w)^{op} = \{ T \in \GTz(\mu) \ | \ \text{fin}(T) \geq wS_\mu \}
     \end{equation}  where $\text{fin}(T)$ denote the final direction of $T$.
     Let $\widehat{\tau}$ be the maximal representative of the final direction of $G$. From the above expression \eqref{eq:altopdem} of opposite Demazure crystal, clearly $\widehat{\tau} \in \overline{S}$ and for any $w \in \overline{S}$ we have $\text{fin}(G) = \widehat{\tau} S_\mu \geq w S_\mu$ which implies that $\widehat{\tau} \geq w$.
 \end{proof}

Next, we will introduce Kogan faces and dual Kogan faces which establish a connection between Demazure Crystals and opposite Demazure crystals, for more details see \cite{fujita}.

\subsection{Kogan faces of GT polytope}\label{ss:kogan}
Let $\mu \in P^+$ and $\GT(\mu)$ be the GT polytope of shape $\mu$ defined in the section \ref{sec:gtpattern}.
Fix a subset $F \subseteq \{(i,j): n \geq i > j \geq 1\}$. Consider the face of $GT(\mu)$ obtained by setting North-East inequalities $NE_{ij}=a_{ij}-a_{i-1,j}=0$ for $(i,j) \in F$ and leaving all other inequalities untouched.  
We call this the {\em Kogan face} $\kogan(\mu,F)$. To each pair $i > j$, associate the simple transposition $s_{i-j} \in S_n$. We list the elements of $F$ in lexicographically increasing order: $(i,j)$ precedes $(i',j') \iff$ either $i<i'$, or $i=i'$ and $j<j'$. Denote the product of the corresponding $s_{i-j}$ in this order by $\sigma(F)$. If $\len \sigma(F) = |F|$, i.e., this word is reduced, we say that $\kogan(\mu,F)$ is {\em reduced Kogan face} and set \cite[Definition 5.1]{fujita}:
\[ \varpi(F) = \wnot \,\sigma(F) \,\wnot. \]
For $w \in S_n$, let $\kogan(\mu, w) := \cup \kogan(\mu, F)$, the union over reduced Kogan face $\kogan(\mu,F)$ for which $\varpi(F) = w$. Denote the set of integral points in $\kogan(\mu, w) $ by $\koganz(\mu, w) $.

We state the following proposition by Fujita which establishes a connection between Kogan faces and Demazure crystals.

\begin{proposition}[\cite{fujita}, Corollary 5.19]\label{prop:fujita}
$\koganz(\mu, \wnot w) = \demcrys(\mu, w)$. 
\end{proposition}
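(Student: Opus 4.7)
The plan is to reduce both sides to a common Bruhat-order inequality about the Demazure product of the NE-equality word of $P$. For $P \in \GTz(\mu)$, set $F(P) := \{(i,j) : a_{ij} = a_{i-1,j}\}$ and let $\sigma(F(P))$ be the word obtained by listing $F(P)$ in lexicographic order with the simple reflection $s_{i-j}$ attached to $(i,j)$; by comparing definitions, $\sigma(F(P))$ coincides with the word $\lie i(P)$ of Procedure~\ref{proced}.

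First, a pattern $P$ lies in $\koganz(\mu,F)$ iff $F \subseteq F(P)$, so $P \in \koganz(\mu,v)$ iff some lex-ordered subset of $F(P)$ is a reduced word for $\wnot v \wnot$. By Lemma~\ref{lem:maxdeo-str} together with the standard subword property ($u \leq *(w)$ iff $u$ is the Demazure product of some subword of $w$), this translates to $\wnot v \wnot \leq *(\sigma(F(P)))$. Specializing $v = \wnot w$ and invoking the Bruhat duality $u \leq u' \iff u'\wnot \leq u\wnot$ yields
\[
P \in \koganz(\mu,\wnot w) \iff *(\sigma(F(P)))\cdot \wnot \leq w.
\]
On the Demazure side, Proposition~\ref{prop:initdem} together with~\eqref{eq:altdem} gives $P \in \demcrys(\mu, w) \iff \widetilde{\varphi}_P \leq w$, where $\widetilde{\varphi}_P$ is the minimal representative of the initial direction of $P$. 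Thus the proposition is equivalent to the pointwise identity
\[
\widetilde{\varphi}_P \;=\; *(\sigma(F(P)))\cdot \wnot \qquad \text{for every } P \in \GTz(\mu).
\]

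I expect this identity to be the main obstacle, and I would prove it by induction on the distance of $P$ from the highest-weight element $G^0_\mu$ in the crystal graph. The base case is a direct computation: every NE edge of $G^0_\mu$ is an equality, so $\sigma(F(G^0_\mu)) = s_1 (s_2 s_1)(s_3 s_2 s_1)\cdots(s_{n-1}\cdots s_1) = \wnot$, and $\widetilde{\varphi}_{G^0_\mu} = e$ matches $\wnot\cdot\wnot$. For the inductive step, one passes from $P$ to $f_i P$ and tracks two local changes: on the crystal side, Littelmann's rule forces $\widetilde{\varphi}_{f_i P}$ to equal either $\widetilde{\varphi}_P$ or $s_i \widetilde{\varphi}_P$, governed by the $i$-signature of $P$; on the polytope side, the recipe in \S\ref{ss:cryopongt} alters a single entry $a_{iM}$, modifying $F(P)$ only at the neighbouring positions. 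The combinatorial heart of the induction is to verify that these two local modifications induce identical left multiplication (by $s_i$ or by $e$) on $*(\sigma(F(P)))\cdot\wnot$, using the recursion $*(s_i \cdot u) = s_i * u$. If the direct induction proves unwieldy, an alternative is to invoke the Littelmann unimodular equivalence between $\GT(\mu)$ and the string polytope for the standard reduced word $s_1(s_2 s_1)\cdots(s_{n-1}\cdots s_1)$ of $\wnot$, and to transfer Kogan's original string-polytope version of the statement across this equivalence.
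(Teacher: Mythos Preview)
The paper does not prove Proposition~\ref{prop:fujita} at all: it is quoted verbatim from Fujita (Corollary~5.19 of \cite{fujita}) and used as an input. So there is no ``paper's own proof'' to compare against; rather, one should check whether your argument is self-contained relative to the rest of the paper.

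Your reduction is correct and clean, but notice what it reduces to. The identity
\[
\widetilde{\varphi}_P \;=\; *(\sigma(F(P)))\cdot \wnot \;=\; *(\lie i(P))\cdot\wnot
\]
is precisely the statement of Lemma~\ref{lem:init}. In the paper, Lemma~\ref{lem:init} is \emph{deduced from} Proposition~\ref{prop:fujita} (via Proposition~\ref{prop:initdem}); you are proposing to go the other way. That is a legitimate strategy, but it means you cannot invoke Lemma~\ref{lem:init} and must supply an independent proof of the identity. Within the paper's logical structure your reduction is otherwise circular.

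The independent proof you sketch---induction along the crystal graph by applying $f_i$---has the right shape, but the inductive step is where all the content lies and you have not carried it out. Concretely, lowering $a_{iM}$ by $1$ can simultaneously destroy one NE-equality and create another (at positions $(i,M)$ and $(i+1,M)$), and these sit in different lex-blocks of $\sigma(F(P))$; showing that the net effect on the Demazure product is exactly left multiplication by $s_i$ or by $e$, in lockstep with the change in $\widetilde{\varphi}_P$, requires a careful case analysis tied to the choice of $M$ in \S\ref{ss:cryopongt}. This is doable but is the actual theorem, not a routine verification. Your fallback---transporting the statement through the unimodular map to the string polytope for the standard reduced word of $\wnot$---is essentially Fujita's own argument, so invoking it amounts to citing \cite{fujita} after all.
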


\subsection{Dual Kogan faces of GT polytope}\label{ss:dualkogan}
Fix a subset $F \subseteq \{(i,j): n \geq i > j \geq 1\}$. Consider the face of $GT(\mu)$ obtained by setting South-East inequalities $SE_{ij}= a_{i-1,j} - a_{i,j+1}=0$ for $(i,j) \in F$ and leaving all other inequalities untouched.  
We call this the {\em dual Kogan face} $\overline{\kogan}(\mu, F)$. To each pair $(i,j) \in \{(i,j): n \geq i > j \geq 1\}$, associate the simple transposition $s_{j} \in S_n$. We consider a total order on pairs $(i,j)$ defined by $(i,j)$ precedes $(i',j') \iff$ either $i<i'$, or $i=i'$ and $j>j'$. We list the elements of $F$ in the increasing order relative to this total order. Denote the product of the corresponding $s_{j}$ in this order by $\overline{\sigma}(F)$. If $\len \overline{\sigma}(F) = |F|$, i.e., this word is reduced, we say that $\overline{\kogan}(\mu, F)$ is a {\em reduced dual Kogan face} and set \cite[Definition 5.1]{fujita}:
\[ \overline{\varpi}(F) = \wnot \,\overline{\sigma}(F) \,\wnot. \]
For $w \in S_n$, let $\overline{\kogan}(\mu, w) := \cup \overline{\kogan}(\mu, F)$, the union over reduced dual Kogan faces $\overline{\kogan}(\mu, F)$ for which $\overline{\varpi}(F) = w$. Denote the set of integral points in $\overline{\kogan}(\mu, w) $ by $\overline{\koganz}(\mu, w) $.

We state the following proposition by Fujita which establishes a connection between dual Kogan faces and opposite Demazure crystals.

\begin{proposition}[\cite{fujita}, Corollary 5.2]\label{prop:fujita1}
 $\overline{\koganz}(\mu, \wnot w \wnot) = \demcrys(\mu, w)^{op}$. 
\end{proposition}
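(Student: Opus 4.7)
The plan is to reduce Proposition~\ref{prop:fujita1} to Proposition~\ref{prop:fujita} via the Schützenberger (Lusztig) involution on Gelfand--Tsetlin patterns. For $\lie g = \sln$ this is a bijection $\Theta \colon \GTz(\mu) \to \GTz(\mu^{\ast})$, where $\mu^{\ast} := -\wnot\mu$, characterized by: (a) $\Theta(G_\mu^0) = G_{\mu^{\ast}}^*$ and $\Theta(G_\mu^*) = G_{\mu^{\ast}}^0$; (b) $\Theta \circ e_i = f_{n-i} \circ \Theta$ and $\Theta \circ f_i = e_{n-i} \circ \Theta$; and (c) $\Theta$ carries the NE-facets of $\GT(\mu)$ to the SE-facets of $\GT(\mu^{\ast})$ through an explicit bijection between subsets $F$ of facets.

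Using (a) and (b), for any reduced word $w = s_{i_1}\cdots s_{i_k}$ one computes
\[
\Theta\bigl(f_{i_1}^{m_1}\cdots f_{i_k}^{m_k} G_\mu^0\bigr) = e_{n-i_1}^{m_1}\cdots e_{n-i_k}^{m_k} G_{\mu^{\ast}}^*,
\]
and since $s_{n-i} = \wnot s_i \wnot$, the concatenated word $s_{n-i_1}\cdots s_{n-i_k}$ is a reduced decomposition of $\wnot w \wnot = (\wnot w)\,\wnot$. Thus $\Theta\bigl(\demcrys(\mu,w)\bigr) = \demcrys(\mu^{\ast}, \wnot w)^{op}$. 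The next step is to upgrade (c) to the labelled level: I would show that $\Theta$ sends a reduced Kogan face $\kogan(\mu,F)$ with $\varpi(F) = \wnot w$ to a reduced dual Kogan face $\overline{\kogan}(\mu^{\ast}, F')$ with $\overline{\varpi}(F') = w\wnot$. Granting this, $\Theta\bigl(\koganz(\mu,\wnot w)\bigr) = \overline{\koganz}(\mu^{\ast}, w\wnot)$, and applying $\Theta$ to Proposition~\ref{prop:fujita} yields
\[
\overline{\koganz}(\mu^{\ast}, w\wnot) = \demcrys(\mu^{\ast}, \wnot w)^{op}.
\]
Substituting $\mu \leftarrow \mu^{\ast}$ and $w \leftarrow \wnot w$ (and using $\wnot(\wnot w)\wnot = w\wnot$) then gives Proposition~\ref{prop:fujita1} in the desired form, since $\mu \mapsto \mu^{\ast}$ and $w \mapsto \wnot w$ are bijections on $P^+$ and $S_n$.

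The main obstacle is the labelled version of (c): one must match the total order producing $\sigma(F)$ (lexicographic in $(i,j)$ with labels $s_{i-j}$) against the ``reverse-within-rows'' order producing $\overline{\sigma}(F')$ (with labels $s_j$), and show that the Dynkin-diagram involution $s_{i-j} \leftrightarrow s_{n-(i-j)}$ induced by (b) intertwines these two labellings up to inversion and conjugation by $\wnot$, in particular preserving reducedness. A self-contained alternative, avoiding $\Theta$ altogether, is to mimic Fujita's inductive proof of Proposition~\ref{prop:fujita} with $G_\mu^0$ replaced by $G_\mu^*$ and $f_i$ by $e_i$: induct on $\ell(w)$, with base case $w = e$ giving $\overline{\koganz}(\mu, e) = \GTz(\mu) = \demcrys(\mu,e)^{op}$, and inductive step showing that right-multiplying $w$ by a simple reflection $s_i$ with $\ell(ws_i) > \ell(w)$ corresponds on the polytope side to relaxing a single SE-inequality at column $i$ and on the crystal side to passing to the union of $e_i$-strings through the previous stage.
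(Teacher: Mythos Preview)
The paper does not prove this proposition at all: it is quoted from Fujita \cite[Corollary~5.2]{fujita} and used as a black box, exactly as Proposition~\ref{prop:fujita} is quoted from \cite[Corollary~5.19]{fujita}. So there is no proof in the present paper to compare your proposal against.

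On the substance of your first approach: properties (a) and (b) of the Sch\"utzenberger involution are standard and correct, but (c) is a genuine gap, not merely an unchecked detail. The Sch\"utzenberger involution on $\GTz(\mu)$ is evacuation, realised as a composition of Bender--Knuth toggles; it is only piecewise-linear on the ambient space, not affine. Consequently there is no reason for $\Theta$ to carry a single Kogan face $\kogan(\mu,F)$ to a single dual Kogan face $\overline{\kogan}(\mu^\ast,F')$, and in general it does not. What one can hope for is a correspondence at the level of the \emph{unions} $\koganz(\mu,\wnot w)$ and $\overline{\koganz}(\mu^\ast, w\wnot)$, but establishing that is essentially equivalent to proving Proposition~\ref{prop:fujita1} from Proposition~\ref{prop:fujita}, so the argument becomes circular.

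Your fallback sketch --- rerun Fujita's induction with $G_\mu^*$ and $e_i$ in place of $G_\mu^0$ and $f_i$ --- is indeed the right strategy and is how Fujita actually treats the dual case. However, the direction in your sketch is inverted. By Definition~\ref{def:opdemcry}, $\demcrys(\mu,w)^{op}$ is built from a reduced word for $ww_0$, so increasing $\ell(w)$ \emph{shrinks} the opposite Demazure crystal; taking the union of $e_j$-strings corresponds to passing from $w$ to $s_j w$ with $\ell(s_j w)<\ell(w)$, not to right-multiplication that increases length. The natural base case is $w=w_0$, where both sides reduce to the single lowest-weight pattern $G_\mu^*$, and one then inducts downward in $\ell(w)$. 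With that correction the parallel to Fujita's argument for Proposition~\ref{prop:fujita} goes through.
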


\subsection{BiKogan faces}\label{s:bikogan}
Let $\lambda,\mu$ be two elements of $P^+$. Consider the GT polytopes $\GT(\lambda)$ and  $\GT(\mu)$, with a typical element of the former given by $(a_{ij})_{n\geq i \geq j \geq 1}$ and that of the latter given by $(b_{ij})_{n\geq i \geq j \geq 1}$ as in figure \ref{fig:gt-array-aij-specified}.
Fix subsets $F,F'$ of the set $\{(i,j): n \geq i > j \geq 1\}$. Consider the face of the Cartesian product $\GT(\lambda)\times \GT(\mu)$ of GT polytopes obtained by setting $ a_{i-1, j} - a_{i, j+1} =0$ for all $(i,j) \in F$ and  $b_{ij} - b_{i-1, j} = 0$ for all $(i,j) \in F'$ and leaving all other inequalities untouched.  Observe that the former equalities concern $SE$ differences, while the latter are $NE$ differences.
We call this face a {\em BiKogan face} and denote it by $\kogan_{\lambda,\mu}(F,F')$.

Define an element $\varpi(F,F') \in S_n$ associated to the pair $(F,F')$ as follows: $\varpi(F,F') = \overline{\sigma}(F)^{-1}\cdot \sigma(F')$.
We say the BiKogan face $\kogan_{\lambda,\mu}(F,F')$ is reduced if $\text{len}(\varpi(F,F')) = |F| + |F'|$.
For a given $w \in S_n$, let $$\kogan_{\lambda,\mu}( w) := \cup \kogan_{\lambda,\mu}(F, F'),$$ the union being taken over all reduced BiKogan faces for which $\varpi(F,F') = w$.
\begin{lemma}\label{lem:Bikogan}
    Let $\kogan_{\lambda,\mu}(F,F')$ be reduced for a given pair $(F,F')$. If $(P,Q) \in \kogan_{\lambda,\mu}(F,F')$ then $\varpi(F,F')\leq *(w(P,Q))$.
\end{lemma}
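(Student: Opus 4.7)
The plan is to produce an explicit subword $u$ of $w(P,Q)$ that is literally a reduced expression for $\varpi(F,F')$ and to conclude via Lemma~\ref{lem:maxdeo-str}. Since $(P,Q)\in\kogan_{\lambda,\mu}(F,F')$, each $(i,j)\in F$ satisfies $SE_{ij}(P)=0$, so by step~\eqref{prcdr1} of Procedure~\ref{proced} the letter $s_j$ appears in $\lie f(P)$; similarly each $(i,j)\in F'$ has $NE_{ij}(Q)=0$ and contributes an $s_{i-j}$ to $\lie i(Q)$ via step~\eqref{prcdr2}. Extracting only the contributions from $F$ and $F'$, in the order inherited from $w(P,Q)=\lie f(P)\lie i(Q)$, defines a subword $u$ of length $|F|+|F'|$.

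The key step is to identify $u$ as a word. The prescription ``bottom to top, left to right'' for $\lie f(P)$ reads the pairs $(i,j)$ with $i$ descending and, within fixed $i$, $j$ ascending --- which is precisely the reverse of the order used in Section~\ref{ss:dualkogan} to build $\overline{\sigma}(F)$ (namely $i$ ascending and, within fixed $i$, $j$ descending). Since reversing the listed factors inverts the product in a Coxeter group, the $F$-restriction of $\lie f(P)$ spells out $\overline{\sigma}(F)^{-1}$. Analogously, ``top to bottom, left to right'' for $\lie i(Q)$ is the lexicographic order on $(i,j)$ used in Section~\ref{ss:kogan} to form $\sigma(F')$, so the $F'$-restriction of $\lie i(Q)$ spells out $\sigma(F')$. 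Concatenating gives $u=\overline{\sigma}(F)^{-1}\sigma(F')$ as a word.

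The reducedness hypothesis $\len(\varpi(F,F'))=|F|+|F'|$ then forces $u$ to be a reduced expression for $\varpi(F,F')$, so $*(u)=\varpi(F,F')$; Lemma~\ref{lem:maxdeo-str} immediately gives $*(u)\leq *(w(P,Q))$, which is the claim. The main obstacle is the combinatorial bookkeeping in the middle paragraph --- matching the two reading prescriptions of Procedure~\ref{proced} to the orderings defining $\overline{\sigma}$ and $\sigma$; once that identification is verified, the Demazure-product machinery finishes the argument without further effort.
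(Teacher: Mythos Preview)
Your proof is correct and follows essentially the same approach as the paper's: both show that $\overline{\sigma}(F)^{-1}$ and $\sigma(F')$ occur as subwords of $\lie f(P)$ and $\lie i(Q)$ respectively, then invoke the reducedness hypothesis together with Lemma~\ref{lem:maxdeo-str}. Your version is in fact more explicit than the paper's, since you spell out the verification that the reading orders in Procedure~\ref{proced} are respectively the reverse of the dual-Kogan order and the Kogan order, which the paper leaves as an ``observe that''.
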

\begin{proof}
    Let $(P,Q) \in \kogan_{\lambda,\mu}(F,F')$ and $P = (a_{ij})_{n\geq i \geq j \geq 1}, \ Q = (b_{ij})_{n\geq i \geq j \geq 1}$. Then $F$ is a subset of $\{ (i,j) \ | \ a_{i-1,j}=a_{i,j+1} \}$ and $F'$ is a subset of $\{ (i,j) \ | \ b_{ij} = b_{i-1,j} \}$. From the item \eqref{prcdr1} and \eqref{prcdr2} of Procedure \ref{proced} observe that $\sigma(F')$ and $\overline{\sigma}(F)^{-1}$ are subwords of $\lie i(Q)$ and $\lie f(P)$ respectively. %and from the item \eqref{prcdr1} of Procedure \ref{proced} we have $\sigma(F')$ is a subword of $\lie i(Q)$. 
    Then we have $\varpi(F,F') = \overline{\sigma}(F)^{-1}\sigma(F)$ is a subword of the word $\lie{f}(P)\lie i(Q) = w(P,Q)$. Since $\kogan_{\lambda,\mu}(F,F')$ is reduced BiKogan face then $\varpi(F,F') \leq *(w(P,Q))$.
\end{proof}

We state the following theorem which gives another description of $\GT(\lambda,w,\mu)$ in terms of BiKogan faces.% a polytopal model of KK-modules in terms of pairs of GT patterns.

\begin{theorem}\label{thm:Bikogan}
Let $\lambda,\mu$ be two elements in $P^+$ and $w$ be a permutation in $S_n$ then $\kogan_{\lambda,\mu}( w\wnot) = \GT(\lambda,w,\mu)$. In particular, the set of integral points in $\kogan_{\lambda,\mu}( w\wnot)$ is a combinatorial model for the KK-module $K(\lambda,w,\mu)$.% is exactly equal the set $\GTz(\lambda,w,\mu)$.
\end{theorem}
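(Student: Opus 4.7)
The plan is to establish the polytopal equality $\kogan_{\lambda,\mu}(w\wnot) = \GT(\lambda,w,\mu)$ and then invoke Theorem \ref{thm:gt-mod} for the combinatorial-model assertion. I begin by rephrasing the defining condition: $(P,Q) \in \GT(\lambda,w,\mu)$ means $\lie{p}(P,Q) = *(w(P,Q))\cdot\wnot \leq w$, which, by right multiplication by $\wnot$ (which reverses Bruhat order), is equivalent to $w\wnot \leq *(w(P,Q))$. The inclusion $\kogan_{\lambda,\mu}(w\wnot) \subseteq \GT(\lambda,w,\mu)$ is then immediate from Lemma \ref{lem:Bikogan}: if $(P,Q) \in \kogan_{\lambda,\mu}(F,F')$ with $\varpi(F,F') = w\wnot$ and $\kogan_{\lambda,\mu}(F,F')$ reduced, then $w\wnot \leq *(w(P,Q))$, so $(P,Q) \in \GT(\lambda,w,\mu)$.

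For the reverse inclusion, I would take $(P,Q)$ with $w\wnot \leq *(w(P,Q))$ and apply the \emph{subword property} of the Demazure product: any element Bruhat-below $*(u)$ is the product of some reduced subword of $u$. Applied to $u = w(P,Q) = \lie{f}(P)\lie{i}(Q)$, this yields a reduced subword of total product $w\wnot$, which splits canonically into a prefix drawn from $\lie{f}(P)$ and a suffix drawn from $\lie{i}(Q)$. Let $F$ (respectively $F'$) be the set of SE-equalities of $P$ (respectively NE-equalities of $Q$) that index the chosen letters. The crucial bookkeeping, which is straightforward but must be verified, is that the reading order in Procedure \ref{proced} for $\lie{f}(P)$ — bottom-to-top, left-to-right — is precisely the reverse of the order defining $\overline{\sigma}(F)$ (increasing $i$, then decreasing $j$); since each letter is an involution, the restriction of $\lie{f}(P)$ to $F$ equals $\overline{\sigma}(F)^{-1}$. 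Analogously, the reading order for $\lie{i}(Q)$ (top-to-bottom, left-to-right) matches the order defining $\sigma(F')$ exactly, so the restriction of $\lie{i}(Q)$ to $F'$ equals $\sigma(F')$. Consequently $\varpi(F,F') = \overline{\sigma}(F)^{-1}\sigma(F') = w\wnot$, and the reducedness of the chosen subword forces $|F|+|F'| = \len(w\wnot)$, confirming that $\kogan_{\lambda,\mu}(F,F')$ is a reduced BiKogan face. Since $F$ and $F'$ consist of actual equalities in $P$ and $Q$, we have $(P,Q) \in \kogan_{\lambda,\mu}(F,F') \subseteq \kogan_{\lambda,\mu}(w\wnot)$.

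The main obstacle is the subword property invoked above. I would dispatch it in two steps: first, $*(u)$ itself admits a reduced expression as a subword of $u$, obtained by iterating the Demazure product one letter at a time and retaining precisely those letters that increase length at each stage; second, the classical subword property of the Bruhat order then produces, for any $v \leq *(u)$, a reduced subexpression of $v$ inside this chosen reduced expression of $*(u)$, and hence inside $u$. With the polytopal equality in hand, restricting to integral points identifies the integral points of $\kogan_{\lambda,\mu}(w\wnot)$ with $\GTz(\lambda,w,\mu)$, and Theorem \ref{thm:gt-mod} then immediately supplies the combinatorial model for $K(\lambda,w,\mu)$.
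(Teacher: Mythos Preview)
Your proof is correct and follows the same route as the paper's. Both directions match: the forward inclusion is Lemma~\ref{lem:Bikogan} in either version, and for the converse the paper simply asserts that from $*(w(P,Q)) \geq w\wnot$ one ``can choose subsets $F,F'$'' making $\kogan_{\lambda,\mu}(F,F')$ reduced with $\varpi(F,F')=w\wnot$ and containing $(P,Q)$, whereas you supply the missing justification via the subword property of the Demazure product and the reading-order bookkeeping (the latter already implicit in the proof of Lemma~\ref{lem:Bikogan}).
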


\begin{proof}
    Let $(P,Q)\in \kogan_{\lambda,\mu}( w\wnot)$. Then there exist subsets $F,F'$ of the set $\{ (i,j)\ | \ n\geq i > j \geq 1\}$ such that the BiKogan face $\kogan_{\lambda,\mu}(F,F')$ is reduced, $\varpi(F,F') = w\wnot$ and $(P,Q)\in \kogan_{\lambda,\mu}(F,F')$. From the lemma \ref{lem:Bikogan}, $\varpi(F,F') \leq *(w(P,Q))$. Hence $\lie p (P,Q)  = *(w(P,Q))\wnot \leq w$ which implies that $(P,Q)\in \GT(\lambda,w,\mu)$.

    For the converse, let $(P,Q)\in \GT(\lambda,w,\mu)$ then $\lie p (P,Q)  = *(w(P,Q))\wnot \leq w$. Since $*(w(P,Q)) \geq w\wnot$, then we can choose subsets $F,F'$ of the set $\{ (i,j)\ | \ n\geq i > j \geq 1\}$ such that the BiKogan face $\kogan_{\lambda,\mu}(F,F')$ is reduced, $(P,Q)\in \kogan_{\lambda,\mu}(F,F')$ and $\varpi(F,F') = w\wnot$. Hence $(P,Q) \in \kogan_{\lambda,\mu}( w\wnot)$.
\end{proof}

\section{Proof of the theorem \ref{thm:gt-mod}}\label{sec:proof-main-thm}

\subsection{} %Consider $P \in \GTz(\mu)$, then $\phi(P)$ is an LS path in $P_\mu$ where $\phi$ is the crystal isomorphism in the proposition \ref{prop:crysiso}. 
We state the following two lemmas \ref{lem:init} and \ref{lem:final} which are the main ingredients to prove theorem \ref{thm:gt-mod}. For a given $P \in \GTz(\mu)$, these lemmas provide a procedure to determine the minimal representative of the initial direction and the maximal representative of the final direction of $\phi(P)$ respectively.% directly from a given $P \in \GTz(\mu)$.

\begin{lemma}\label{lem:init}
Let $\lie i (P)$ be the word associated in item \eqref{prcdr2} of the procedure \ref{proced} then, the minimal representative of the initial direction of the LS path $\phi(P)$ is $*(\lie i (P)) w_0$.
\end{lemma}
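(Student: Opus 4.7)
The plan is to reduce the claim to properties of Kogan faces and then to a standard Demazure-product identity about subwords. By Proposition \ref{prop:initdem}, the minimal representative $\widetilde\varphi$ of the initial direction of $\phi(P)$ equals the unique minimum of $S = \{w \in S_n : P \in \demcrys(\mu, w)\}$. Applying Proposition \ref{prop:fujita}, I would unwind ``$P \in \demcrys(\mu, w)$'' as ``$P$ lies in some reduced Kogan face $\kogan(\mu, F)$ with $\varpi(F) = \wnot w$''. Since $\varpi(F) = \wnot\,\sigma(F)\,\wnot$, the condition rewrites as $w = \sigma(F)\,\wnot$; and ``$P \in \kogan(\mu, F)$'' amounts to $F \subseteq F_P$, where $F_P := \{(i,j) : a_{i,j} = a_{i-1,j}\}$ is the set of NE-equalities satisfied by $P$---precisely the set that item \eqref{prcdr2} of Procedure \ref{proced} reads off. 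Therefore
\[
S = \{\sigma(F)\,\wnot \;:\; F \subseteq F_P \text{ and } \sigma(F) \text{ is reduced}\}.
\]

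Next, by Lemma \ref{lem:maxmin}, $\textbf{min}\,S = (\textbf{max}\,\{\sigma(F)\}) \cdot \wnot$, where the max ranges over the same collection of $F$'s. So the lemma is reduced to the identity
\[
\textbf{max}\{\sigma(F) : F \subseteq F_P,\ \sigma(F) \text{ reduced}\} \;=\; *(\lie i(P)).
\]
The observation that powers this reduction is that listing $F_P$ in the lexicographic order of section \ref{ss:kogan} produces precisely the word $\lie i(P)$. Hence subsets $F \subseteq F_P$ correspond bijectively to subwords of $\lie i(P)$, with $\sigma(F)$ equal to the product of the corresponding subword.

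The $\leq$ direction of the identity follows from Lemma \ref{lem:maxdeo-str}: any reduced $\sigma(F)$ coincides with its own Demazure product, which is bounded above by $*(\lie i(P))$. I expect the main obstacle to be the $\geq$ direction: one must exhibit a single reduced subword of $\lie i(P)$ whose product realizes $*(\lie i(P))$. I would prove this by induction on the word length of $\lie i(P)$, writing $\lie i(P) = w' \cdot s_{i_k}$, taking inductively a reduced subword of $w'$ with product $*(w')$, and then adjoining the letter $s_{i_k}$ exactly when $*(w')\,s_{i_k} > *(w')$ in the Bruhat order; associativity of $*$ (Lemma \ref{lem:dem}) then guarantees the resulting subword is reduced and has product $*(\lie i(P))$ in either case. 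Putting the two directions together gives $\widetilde\varphi = \textbf{min}\,S = *(\lie i(P)) \cdot \wnot$, which is the asserted formula.
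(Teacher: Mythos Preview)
Your proposal is correct and follows essentially the same route as the paper: both reduce to Proposition~\ref{prop:initdem}, translate the set $S$ via Proposition~\ref{prop:fujita} into $\{\sigma(F)\wnot : F\subseteq F_P,\ \sigma(F)\text{ reduced}\}$, and then identify the maximum of the $\sigma(F)$'s with $*(\lie i(P))$ using Lemma~\ref{lem:maxdeo-str}. The one point where you are actually more careful than the paper is the $\geq$ direction---the paper simply asserts the existence of a reduced $F^0\subseteq \lie F_P$ with $\sigma(F^0)=*(\lie i(P))$, whereas your inductive construction explicitly produces such a subword.
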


\begin{lemma}\label{lem:final}
Let $\lie f (P)$ be the word associated in item \eqref{prcdr1} of the procedure \ref{proced} then, the maximal representative of the final direction of the LS path $\phi(P)$ is $(*(\lie f (P)))^{-1}$.
\end{lemma}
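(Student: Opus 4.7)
The plan is to combine Proposition \ref{prop:finaldem}, which characterizes $\hat\tau$ as a Bruhat maximum, with Fujita's dual-Kogan description of opposite Demazure crystals (Proposition \ref{prop:fujita1}), and then to match the reading order of Procedure \ref{proced}(\ref{prcdr1}) against the dual-Kogan reading order of \S\ref{ss:dualkogan}. By Proposition \ref{prop:finaldem}, I would write $\hat\tau = \brmax \overline{S}$, where $\overline{S} = \{w \in S_n \st P \in \demcrys(\mu,w)^{op}\}$. Proposition \ref{prop:fujita1} recasts $\overline{S}$ as the collection of all $\overline{\sigma}(F)$ for which $\overline{\kogan}(\mu,F)$ is a reduced dual Kogan face containing $P$; writing $P = (a_{ij})$, the latter condition simplifies to $F \subseteq F_P := \{(i,j) \st a_{i-1,j}=a_{i,j+1}\}$.

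Next I would identify $\brmax \overline{S}$ with a Demazure product. Let $U$ be the word obtained by listing every $(i,j) \in F_P$ in the dual-Kogan order and attaching the letter $s_j$ to the pair $(i,j)$. Iterating Lemma \ref{lem:ww} yields $I(*(U)) = I(s_{j_1}) \cdots I(s_{j_k})$, which set-theoretically coincides with the collection of all products of subwords of $U$. Every reduced $\overline{\sigma}(F)$ with $F \subseteq F_P$ is such a subword product, and conversely every element of $I(*(U))$ admits a reduced-subword expression by the standard subword property for Bruhat order. Hence $\overline{S} = I(*(U))$, so $\hat\tau = \brmax I(*(U)) = *(U)$.

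Finally, I would compare the two orderings. The dual-Kogan order of \S\ref{ss:dualkogan} lists $(i,j)$ before $(i',j')$ when either $i<i'$, or $i=i'$ and $j>j'$, whereas Procedure \ref{proced}(\ref{prcdr1}) reads labels bottom-to-top (decreasing $i$) and left-to-right for fixed $i$ (increasing $j$). These two orders are exact reverses, so $\lie f(P)$ is literally the letter-reverse of $U$. Since each $s_j$ is an involution and the Bruhat order is preserved under $u \mapsto u^{-1}$, inverting the factorization $I(*(U)) = \prod_{(i,j) \in F_P} I(s_j)$ term by term gives $I(*(\lie f(P))) = I(*(U))^{-1} = I(*(U)^{-1})$, and therefore $*(U) = (*(\lie f(P)))^{-1}$. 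Substituting into the previous step yields $\hat\tau = (*(\lie f(P)))^{-1}$, which is the claim. I expect the reading-order comparison to be the only delicate step, since a sign error in matching the two conventions would swap the answer with its inverse; the rest is a formal manipulation of Demazure products via Lemmas \ref{lem:ww}, \ref{lem:maxstar}, and \ref{lem:maxdeo-str}.
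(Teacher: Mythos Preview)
Your proposal is correct and follows essentially the same route as the paper's proof: both combine Proposition~\ref{prop:finaldem} with Fujita's dual-Kogan identification (Proposition~\ref{prop:fujita1}) to reduce the problem to computing the Bruhat-maximal $\overline{\sigma}(F)$ over reduced $F\subseteq\{(i,j):a_{i-1,j}=a_{i,j+1}\}$, identify this maximum with the Demazure product of the full word $U$, and then observe that the dual-Kogan reading order is the reverse of Procedure~\ref{proced}(\ref{prcdr1}) so that $*(U)=(*(\lie f(P)))^{-1}$. The only cosmetic difference is that you argue $\overline S=I(*(U))$ via Lemma~\ref{lem:ww} and the subword property, whereas the paper invokes Lemma~\ref{lem:maxdeo-str} directly to extract the maximum; these are equivalent bookkeeping choices.
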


We will give a proof of lemma \ref{lem:init} and \ref{lem:final} later in this section.

The following two corollaries are immediate results of the above lemmas which gives another characterization of the Demazure crystals and opposite Demazure crystals.

\begin{corollary}
    With the notation in the lemma \ref{lem:init} \[ \demcrys(\mu,w) = \{ P \in \GTz(\mu) \ | \ *(\lie{i}(P))\wnot \leq w \}. \]
\end{corollary}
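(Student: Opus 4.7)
The plan is to derive this corollary directly from Lemma \ref{lem:init} together with Proposition \ref{prop:initdem}. These two results combined identify $*(\lie{i}(P))\wnot$ as the unique minimum of the set $S := \{w \in S_n \ | \ P \in \demcrys(\mu,w)\}$: Proposition \ref{prop:initdem} identifies $\min S$ as the minimal coset representative of the initial direction of $\phi(P)$, and Lemma \ref{lem:init} evaluates this minimal representative explicitly to be $*(\lie{i}(P))\wnot$.

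With this in hand, the desired set equality follows by a short two-sided argument. For the forward inclusion, if $P \in \demcrys(\mu,w)$ then $w \in S$, and therefore $*(\lie{i}(P))\wnot = \min S \leq w$, as required. For the reverse inclusion, suppose $*(\lie{i}(P))\wnot \leq w$. Since $*(\lie{i}(P))\wnot \in S$ (being its minimum), we have $P \in \demcrys(\mu, *(\lie{i}(P))\wnot)$, and the monotonicity of Demazure crystals in $w$ yields $P \in \demcrys(\mu, *(\lie{i}(P))\wnot) \subseteq \demcrys(\mu, w)$. Monotonicity itself is transparent from the alternative description \eqref{eq:altdem}: if $u \leq v$ in $S_n$ then $u S_\mu \leq v S_\mu$ in the induced Bruhat order on cosets, and hence the condition $\text{init}(T) \leq u S_\mu$ implies $\text{init}(T) \leq v S_\mu$.

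There is no substantial obstacle here: once Lemma \ref{lem:init} is granted (its proof is deferred to the remainder of the section), the corollary is a formal consequence of the upward-closedness of $S$ together with its having a unique minimum. The only care needed is to pass between the Bruhat order on $S_n$ and the induced order on cosets $S_n/S_\mu$, but this is handled automatically by working with the minimal coset representative $*(\lie{i}(P))\wnot$ throughout.
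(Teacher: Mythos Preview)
Your proposal is correct and matches the paper's intended approach: the paper states this corollary as an ``immediate result'' of Lemma~\ref{lem:init} without further proof, and what you have written is precisely the natural unpacking of that immediacy via Proposition~\ref{prop:initdem} and the description \eqref{eq:altdem}. If anything, you have been slightly more careful than necessary by routing the reverse inclusion through monotonicity of Demazure crystals rather than appealing directly to \eqref{eq:altdem} and the standard equivalence between $\text{init}(P)\leq wS_\mu$ in $S_n/S_\mu$ and $\widetilde{\varphi}\leq w$ in $S_n$ for the minimal representative $\widetilde{\varphi}$, but both routes are equally valid.
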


\begin{corollary}
    With the notation in the lemma \ref{lem:final} \[ \demcrys(\mu,w)^{op} = \{ P \in \GTz(\mu) \ | \ \wnot(*(\lie{f}(P)))^{-1} \leq \wnot w \}. \]
\end{corollary}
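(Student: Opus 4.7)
The plan is to derive the corollary as an immediate consequence of Lemma~\ref{lem:final}, Proposition~\ref{prop:finaldem}, and the order-reversing property of multiplication by $\wnot$ on the Bruhat order. Unlike the main theorem, this is a single-factor statement about $\GTz(\mu)$, so no bijection with pairs or path concatenation enters.

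First, I would fix $P \in \GTz(\mu)$ and let $\widehat{\tau}$ denote the maximal representative (in the coset $\tau S_\mu$) of the final direction of the LS path $\phi(P)$, where $\phi$ is the crystal isomorphism of Proposition~\ref{prop:crysiso}. By the alternative description \eqref{eq:altopdem} of opposite Demazure crystals via final directions, together with Proposition~\ref{prop:finaldem}, membership $P \in \demcrys(\mu,w)^{op}$ is equivalent to $w \leq \widehat{\tau}$ in the Bruhat order on $S_n$. Lemma~\ref{lem:final} then identifies this maximal representative as $\widehat{\tau} = (*(\lie{f}(P)))^{-1}$, so I would rewrite the membership condition as
\[
P \in \demcrys(\mu,w)^{op} \iff w \leq (*(\lie{f}(P)))^{-1}.
\]

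To conclude, I would invoke the standard fact that left multiplication by the longest element $\wnot$ reverses the Bruhat order: for any $u,v \in S_n$, one has $u \leq v$ iff $\wnot v \leq \wnot u$. Applying this to $u = w$ and $v = (*(\lie{f}(P)))^{-1}$ converts the inequality $w \leq (*(\lie{f}(P)))^{-1}$ into the desired $\wnot(*(\lie{f}(P)))^{-1} \leq \wnot w$, yielding
\[
\demcrys(\mu,w)^{op} = \{ P \in \GTz(\mu) \mid \wnot(*(\lie{f}(P)))^{-1} \leq \wnot w \}.
\]

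There is no real obstacle here beyond citing the correct pieces: the work is all in Lemma~\ref{lem:final} (which is the combinatorial heart, identifying the maximal final-direction representative with the Demazure product of the blue word read from $P$) and in Proposition~\ref{prop:finaldem} (which identifies opposite Demazure crystal membership with a Bruhat inequality on final directions). The only mild subtlety is ensuring that the coset-level inequality $\text{fin}(P) \geq w S_\mu$ translates cleanly to an inequality between full Weyl group elements; this is handled exactly as in the proof of Proposition~\ref{prop:finaldem}, using maximal coset representatives.
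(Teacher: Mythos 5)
Your proposal is correct and follows exactly the route the paper intends: the paper states this corollary as an immediate consequence of Lemma \ref{lem:final}, and you supply precisely that derivation, combining Lemma \ref{lem:final} with Proposition \ref{prop:finaldem} (equivalently, the description \eqref{eq:altopdem}) and the order-reversal of the Bruhat order under left multiplication by $\wnot$. The coset-to-element translation you flag is indeed the only point needing care, and it is handled the same way as in the paper's proof of Proposition \ref{prop:finaldem}.
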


\subsection{} The next proposition will give a connection between the LS path model and the GT model for the KK-modules. We will use it to prove theorem \ref{thm:gt-mod}.

\begin{proposition}\label{prop:GT-LS}
The following diagram commutes
\[
\begin{tikzcd}
\GTz(\lambda)\times \GTz(\mu) \arrow[r, "\phi \times \phi"] \arrow[rd, "\lie p"'] & P_\lambda \ostar P_\mu \arrow[d, "\lie w"] \\
 & S_n
\end{tikzcd}.
\]
In other words \begin{equation}\label{eq:13}
    \lie{p}(P,Q) = \lie{w}(\phi(P) \ostar \phi(Q)) \text{ for all } (P, Q) \in \GTz(\lambda)\times \GTz(\mu)
\end{equation}
where $\phi \times \phi$ is the crystal isomorphism given in the proposition \ref{prop:crysiso}, $\lie p$ is defined in the equation \eqref{eq:per-asso-GT} and $\lie w$ is defined in the equation \eqref{eq:wasso}.
\end{proposition}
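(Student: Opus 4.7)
The plan is to unwind both sides of the claimed equality \eqref{eq:13} using the simplification of $\lie{w}$ from Proposition \ref{lem:wasso} together with Lemmas \ref{lem:init} and \ref{lem:final}, and then observe that what remains is exactly Lemma \ref{lem:dem} applied to the concatenation $\lie{f}(P) \lie{i}(Q)$.

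More precisely, I would start from the right-hand side of \eqref{eq:13}. By Proposition \ref{lem:wasso},
\[
\lie{w}(\phi(P) \ostar \phi(Q)) \;=\; \bigl(\widehat{\tau}^{-1} * \widetilde{\varphi}\, \wnot\bigr)\cdot \wnot,
\]
where $\widehat{\tau}$ is the maximal representative of the final direction of $\phi(P)$ and $\widetilde{\varphi}$ is the minimal representative of the initial direction of $\phi(Q)$. Since $\lie{p}(P,Q) = *(w(P,Q)) \cdot \wnot$ by \eqref{eq:per-asso-GT}, and right multiplication by $\wnot$ is a bijection on $S_n$, the identity \eqref{eq:13} is equivalent to
\[
*(w(P,Q)) \;=\; \widehat{\tau}^{-1} * \widetilde{\varphi}\,\wnot.
\]

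Next I would replace the two factors on the right using the two lemmas about initial and final directions. Lemma \ref{lem:final} gives $\widehat{\tau} = (*(\lie{f}(P)))^{-1}$, so $\widehat{\tau}^{-1} = *(\lie{f}(P))$. Lemma \ref{lem:init} gives $\widetilde{\varphi} = *(\lie{i}(Q))\,\wnot$, so $\widetilde{\varphi}\,\wnot = *(\lie{i}(Q))$. Substituting these, the equality to prove becomes
\[
*(w(P,Q)) \;=\; *(\lie{f}(P)) \,*\, *(\lie{i}(Q)).
\]
But by construction in Section \ref{subsec:asso-perm} we have $w(P,Q) = \lie{f}(P)\,\lie{i}(Q)$ as a concatenation of words in the simple reflections, so Lemma \ref{lem:dem} (associativity of the Demazure product, applied to the split at the junction of the two subwords) delivers this identity immediately. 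This proves \eqref{eq:13} for every $(P,Q) \in \GTz(\lambda) \times \GTz(\mu)$, and hence the commutativity of the diagram.

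No step in this argument is substantive beyond invoking earlier results, so there is no real obstacle; the entire content of the proposition sits in Lemmas \ref{lem:init}, \ref{lem:final}, and \ref{lem:dem}, together with the simplification in Proposition \ref{lem:wasso}. The only care needed is to track the $\wnot$ factors correctly and to note that $\lie{f}(P)$ is built from the pattern $P$ (contributing the final direction of $\phi(P)$) while $\lie{i}(Q)$ is built from $Q$ (contributing the initial direction of $\phi(Q)$), which is precisely how $w(P,Q)$ is read in Procedure \ref{proced}.
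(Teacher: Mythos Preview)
Your proposal is correct and follows essentially the same approach as the paper's own proof: both reduce \eqref{eq:13} via Proposition~\ref{lem:wasso} and \eqref{eq:per-asso-GT} to the identity $*(w(P,Q)) = \widehat{\tau}^{-1} * \widetilde{\varphi}\,\wnot$, then invoke Lemmas~\ref{lem:final} and~\ref{lem:init} to identify the two factors and Lemma~\ref{lem:dem} to split the Demazure product along $w(P,Q) = \lie{f}(P)\,\lie{i}(Q)$. The only cosmetic difference is the order in which you apply Lemma~\ref{lem:dem} versus the two direction lemmas.
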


\begin{proof}
For a given pair $(P,Q) \in \GTz(\lambda) \times \GTz(\mu)$, by lemma \ref{lem:wasso} we know that
\begin{equation}\label{eq:wasso-LS-p}
    \lie{w}(\phi(P) \ostar \phi(Q)) = (\widehat{\tau}^{-1}*\widetilde{\varphi}w_0)\cdot w_0
\end{equation}
where $\widehat{\tau}$ is the maximal representative of the final direction of $\phi(P)$ and $\widetilde{\varphi}$ is the minimal representative of the initial direction of $\phi(Q)$. The association of Weyl group element to the pair $(P,Q)$ is given by the equation \eqref{eq:per-asso-GT} is 
\begin{equation}\label{eq:wasso-gt-p}
    \lie p (P,Q) = *(w(P,Q))\cdot w_0.
\end{equation}
From the equations \eqref{eq:wasso-LS-p} and \eqref{eq:wasso-gt-p}, it is enough to prove that 
\begin{equation}\label{eq:enough-req}
    \widehat{\tau}^{-1}*\widetilde{\varphi}w_0 = *(w(P,Q)).
\end{equation} 
We have $w(P,Q)  = \lie f (P) \lie i (Q)$ in the procedure \ref{proced} and by lemma \ref{lem:dem} $$*(w(P,Q)) =  *(\lie f (P) \lie i (Q)) = *\lie f (P) * (* \lie i (Q)).$$
Finally by lemma \ref{lem:final} and lemma \ref{lem:init} we have $$ *\lie f (P) * (* \lie i (Q)) = \widehat{\tau}^{-1}*\widetilde{\varphi}w_0 .$$
\end{proof}

 Finally, we can prove the theorem \ref{thm:gt-mod}.
\begin{proof}[Proof of theorem \ref{thm:gt-mod}]
It is clear from the above equation \eqref{eq:13} that
\begin{equation}\label{eq:GT-LS}
 \GTz(\lambda,w,\mu) = \{ (P,Q) \in \GTz(\lambda) \times \GTz(\mu) \ | \ \phi(P) \ostar \phi(Q) \in P(\lambda,w,\mu)  \}
 \end{equation}
 Combining theorem \ref{thm:lspath-mod}, equation \eqref{eq:GT-LS} and $\phi \times \phi : \GTz(\lambda) \times \GTz(\mu) \mapsto P_\lambda \ostar P_\mu$ is a crystal isomorphism proves the theorem \ref{thm:gt-mod}. 
\end{proof}

\subsection{} Next, we will prove lemmas \ref{lem:init} and lemma \ref{lem:final}.

\begin{proof}[Proof of lemma \ref{lem:init}]
Let $P = (a_{ij})_{n\geq i \geq j \geq 1}$ be an integral GT-pattern of shape $\mu$. Define the set $\lie{F}_P:= \{ (i,j) | \ a_{ij}-a_{i-1,j} = 0 \ \text{for} \ n \geq i > j \geq 1 \}$. We associate a word $w(\lie{F}_P)$ corresponding to $\lie{F}_P$ as follows: write $s_{i-j}$ from left to right for all $(i,j) \in \lie{F}_P$ in the total order defined in the section \ref{ss:kogan}. Note that $\lie{i}(P) = w(\lie{F}_P)$. 

From the proposition \ref{prop:fujita}, for any $F \subseteq \lie{F}_P$ such that $\kogan(\mu,F)$ is a reduced Kogan face then $P \in \koganz(\mu,F)$ and hence $P \in \demcrys(\mu, \sigma(F)\wnot)$. Further, if $P \in \demcrys(\mu, w)$ then there exist a subset $F \subseteq \lie{F}_P$ such that $\kogan(\mu,F)$ is reduced, $w=\sigma(F)\wnot$ and $P \in \koganz(\mu,F)$.

There exist a subset $F^0 \subseteq \lie{F}_P$ such that $\kogan(\mu,F^0)$ is reduced, $\sigma(F^0) = *(w(\lie{F}_P))$ and $P \in \demcrys(\mu, \sigma(F^0)\wnot)$. By lemma \ref{lem:maxdeo-str}  $\sigma(F^0) = *(w(\lie{F}_P))$ is the unique maximal element of the set $$\{ \sigma(F) \ | \ F \subseteq \lie{F}_P \text{ and } \kogan(\mu,F) \ \text{is reduced} \}.$$ Hence by above arguments $\sigma(F^0)w_0$ is the unique minimal permutation of the set $$\{ w \ | \ P \in \demcrys(\mu, w)\}.$$ Now we appeal to the proposition \ref{prop:initdem} to conclude that $\sigma(F^0)w_0 = *(w(\lie{F}_P))w_0$ is the minimal representative of the initial direction of $\phi(P)$.
\end{proof}

\begin{proof}[Proof of lemma \ref{lem:final}]
The proof of this lemma is exactly similar to the proof of the above lemma \ref{lem:init}. However, we will record the proof below for completeness.
Le $P = (a_{ij})_{n\geq i > j \geq 1}$ be an integral GT-pattern of shape $\mu$. Define the set $\lie{H}_P:= \{ (i,j) | \ a_{i-1,j}-a_{i,j+1} = 0 \ \text{for} \ n \geq i > j \geq 1 \}$. We associate a word $w(\lie{H}_P)$ corresponding to $\lie{H}_P$ as follows: write $s_{j}$ from left to right for all $(i,j) \in \lie{H}_P$ in the total order defined in the section \ref{ss:dualkogan}. Note that $\text{rev}(\lie{f}(P)) = w(\lie{H}_P)$ where $\text{rev}(\lie{f}(P))$ is the word obtained by writing the word $\lie{f}(P)$ in the reverse order, then $(*(\lie{f}(P)))^{-1} = *(w(\lie{H}_P))$.

From the proposition \ref{prop:fujita1}, for any $F \subseteq \lie{H}_P$ such that $\overline{\kogan}(\mu,F)$ is a reduced dual Kogan face then $P \in \overline{\koganz}(\mu,F)$ and hence $P \in \demcrys(\mu, \overline{\sigma}(F))^{op}$. Further, if $P \in \demcrys(\mu, w)^{op}$ then there exist a subset $F \subseteq \lie{H}_P$ such that $\overline{\kogan}(\mu,F)$ is reduced, $w=\overline{\sigma}(F)$ and $P \in \overline{\koganz}(\mu,F)$.

There exist a subset $\overline{F'} \subseteq \lie{H}_P$ such that $\overline{\kogan}(\mu,\overline{F'})$ is reduced, $\overline{\sigma}(\overline{F'}) = *(w(\lie{H}_P))$ and $P \in \demcrys(\mu, \overline{\sigma}(\overline{F'}))$. By lemma \ref{lem:maxdeo-str}  $\overline{\sigma}(\overline{F'}) = *(w(\lie{H}_P))$ is the unique maximal element of the set $$\{ \overline{\sigma}(F) \ | \ F \subseteq \lie{H}_P \text{ and } \overline{\kogan}(\mu,F) \ \text{is reduced} \}.$$ Hence by above arguments $\overline{\sigma}(\overline{F'})$ is the unique maximal permutation of the set $$\{ w \ | \ P \in \demcrys(\mu, w)^{op}\}.$$ Now we appeal to the proposition \ref{prop:finaldem} to conclude that $\overline{\sigma}(\overline{F'}) = *(w(\lie{H}_P))$ is the maximal representative of the final direction of $\phi(P)$.
\end{proof}

\section{Data Availibility Statement}

This manuscript has no associated data.
% References
%%%%%%%%%%%%%%%%%%%%%%%%%%%%%%%%%%%%%%%%%%%%%%%%%%%%%%%%%%%%%%%%%%%
%\newpage

\bibliographystyle{plain}

\end{document}